\numberwithin{equation}{section}
\numberwithin{figure}{section}
\theoremstyle{plain}
\newtheorem{thm}{\protect\theoremname}
  \theoremstyle{plain}
  \newtheorem{prop}[thm]{\protect\propositionname}
  \theoremstyle{remark}
  \newtheorem{rem}[thm]{\protect\remarkname}
  \theoremstyle{plain}
  \newtheorem{lem}[thm]{\protect\lemmaname}
  \theoremstyle{plain}
  \newtheorem*{thm*}{\protect\theoremname}
  \theoremstyle{plain}
  \newtheorem{cor}[thm]{\protect\corollaryname}
  \providecommand{\corollaryname}{Corollary}
  \providecommand{\lemmaname}{Lemma}
  \providecommand{\propositionname}{Proposition}
  \providecommand{\remarkname}{Remark}
  \providecommand{\theoremname}{Theorem}
\providecommand{\theoremname}{Theorem}
\begin{document}
\global\long\def\bbZ{\mathbb{Z}}

\global\long\def\bbN{\mathbb{N}}

\global\long\def\bbX{\mathbb{X}}

\global\long\def\cF{\mathcal{F}}

\global\long\def\cB{\mathcal{B}}

\global\long\def\fT{\mathfrak{\tau}}

\global\long\def\cCT{\mathcal{C}_{\fT-1}}

\global\long\def\cC{\mathcal{C}}

\global\long\def\RR{\mathbb{R}}

\global\long\def\BB{\mathcal{B}}

\global\long\def\BB{\mathcal{B}}

\global\long\def\RR{\mathbb{R}}

\global\long\def\Tau{\mathcal{T}}

\global\long\def\Ra{\mathcal{R}}

\global\long\def\NN{\mathbb{N}}

\global\long\def\ZZ{\mathbb{Z}}

\global\long\def\P{\mathrm{P}}

\global\long\def\C#1{\mathcal{#1}}

\global\long\def\ZZ{\mathbb{Z}}

\global\long\def\RN#1{\left(T^{#1}\right)'}

\global\long\def\F#1{\mathcal{F}_{#1}}

\global\long\def\lloc{\ll^{loc}}

\global\long\def\NN{\mathbb{N}}

\global\long\def\PP{\mathbb{P}}

\global\long\def\lsup{\varlimsup_{n\to\infty}}

\global\long\def\ubeta{\overline{\beta}}

\global\long\def\lbeta{\underline{\beta}}

\global\long\def\linf{\varliminf_{n\to\infty}}

\global\long\def\lsupK{\varlimsup_{n\to\infty,n\in K}}

\global\long\def\linfK{\varliminf_{n\to\infty,n\in K}}

\title[gap for symmetric Birkhoff sums]{A universal divergence rate for symmetric Birkhoff Sums in infinite
ergodic theory}

\author{Zemer Kosloff}

\curraddr{Mathematics institute, University of Warwick, Coventry, CV47AL, United
Kingdom. }

\email{z.kosloff@warwick.ac.uk }
\begin{abstract}
We show that there exists a universal gap in the failure of the ergodic
theorem for symmetric Birkhoff sums in infinite ergodic theory. 
\end{abstract}

\maketitle

\section{Introduction}

For an ergodic infinite measure preserving system, the ergodic theorem
fails in the sense that there does not exist a normalizing sequence
for its Birkhoff sums \cite{Aaronson no et}. That is for every conservative,
ergodic, measure preserving system $\left(X,\BB,m,T\right)$ with
$m(X)=\infty$ , $0\leq f\in L_{1}\left(X,m\right)$ and $a_{n}\to\infty$,
either 
\[
\liminf_{n\to\infty}\frac{S_{n}(f)}{a_{n}}=0\ a.e.
\]
or
\[
\limsup_{n\to\infty}\frac{S_{n}(f)}{a_{n}}=\infty\ a.e.
\]
Here $S_{n}(f):=\sum_{k=0}^{n-1}f\circ T^{k}$ denotes the Birkhoff
sum of $f$. For an invertible transformation one can consider symmetric
(two-sided) Birkhoff sums 
\[
\Sigma_{n}\left(f\right)(x):=\sum_{|k|<n}f\left(T^{k}x\right),
\]
where the summation is in a symmetric time interval. The papers \cite{Aaronson-Kosloff-Weiss,F. Maucourant and B. Schapira}
contain examples of infinite measure preserving transformations for
which there exists normalizing constants $a_{n}\to\infty$ such that
for every $0\leq f\in L_{1}(X,m)$, 
\begin{equation}
\linf\frac{\Sigma_{n}(f)}{a_{n}}>0\ a.e.\ \text{and }\lsup\frac{\Sigma_{n}(f)}{a_{n}}<\infty\ a.e.\label{eq: Symmetric boundedness}
\end{equation}
The examples of \cite{Aaronson-Kosloff-Weiss} include some natural
transformations in infinite ergodic theory such as the class of rank
one transformations with bounded cutting sequence and generalized
recurrent events with a certain trimmed sum property (some null recurrent
Markov chains are in this class). This shows that symmetric Birkhoff
sums can behave better than their one sided counterparts. However
in the work with Jon Aaronson and Benjamin Weiss we proved that for
an invertible infinite measure preserving transformation, there is
no ergodic theorem for symmetric Birkhoff sums. That is for every
normalizing sequence $a_{n}\to\infty$ and $0\leq f\in L_{1}(X,m)$
if
\\
$0<\linf\frac{1}{a_{n}}\Sigma_{n}(f)(x)<\infty$,
then 
\[
\linf\frac{\Sigma_{n}(f)}{a_{n}}<\lsup\frac{\Sigma_{n}(f)}{a_{n}}\ a.e.
\]
The purpose of this note (which is largely taken from the authors
Ph.D. thesis) is to prove a universal quantitative divergence rate
for symmetric Birkhoff sums. 
\begin{thm}
\label{thm: Main Theorem}For every conservative, ergodic, measure
preserving system $\left(X,\BB,m,T\right)$ with $m(X)=\infty$ ,
$0\leq f\in L_{1}\left(X,m\right)$ and $a_{n}\to\infty$, if
\\
 $0<\linf\frac{1}{a_n}\Sigma_{n}(f)(x)<\infty$, then 
\[
\frac{\linf\frac{\Sigma_{n}(f)}{a_{n}}}{\lsup\frac{\Sigma_{n}(f)}{a_{n}}}\leq\frac{10001}{10002}.
\]
 
\end{thm}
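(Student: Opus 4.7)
The approach is a proof by contradiction. Suppose $\alpha := \linf \Sigma_n(f)/a_n$ and $\beta := \lsup \Sigma_n(f)/a_n$ are a.e.\ finite positive constants with $\alpha/\beta > 10001/10002$. I would first verify that $\alpha$ and $\beta$ really are a.e.\ constants by using the near-invariance
\[
\Sigma_n(f)\circ T - \Sigma_n(f) = f\circ T^n - f\circ T^{-(n-1)}
\]
together with $a_n \to \infty$ and ergodicity of $T$, giving $T$-invariance of $\linf$ and $\lsup$ and hence constancy. Hopf's ratio ergodic theorem applied to $\Sigma_n(f)/\Sigma_n(g)$ also shows $\alpha/\beta$ is independent of $f\in L_1^+$, so I am free to choose a convenient test function (for instance an indicator of a sweep-out set of finite measure).

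Next I would bring in Aaronson's no-ergodic theorem for the one-sided Birkhoff sums $S_n(f) := \sum_{k=0}^{n-1}f\circ T^k$, recalled in the introduction: either $\linf S_n(f)/a_n = 0$ or $\lsup S_n(f)/a_n = \infty$ a.e. Replacing $T$ by $T^{-1}$ if necessary, assume the former. The identity
\[
\Sigma_n(f)(x) = S_n(f)(x) + S_n(f)(T^{-(n-1)}x) - f(x)
\]
then forces the backward one-sided piece to carry essentially all of $\Sigma_n(f)/a_n$ along any subsequence realising $S_n(f)/a_n \to 0$, and in particular this backward piece is trapped in the envelope $[\alpha - \varepsilon,\beta+\varepsilon]$ eventually.

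The decisive step is a $10002$-scale pigeonhole based on the shell decomposition
\[
\Sigma_{(k+1)n}(f) - \Sigma_{kn}(f) = S_n(f)\circ T^{kn} + S_n(f)\circ T^{-((k+1)n-1)}, \qquad k = 0,1,\ldots,10001.
\]
Each of the $10002$ partial sums $\Sigma_{jn}(f)/a_{jn}$, $j=1,\ldots,10002$, must lie in $[\alpha - \varepsilon,\beta+\varepsilon]$ for large $n$. The hypothesis $\beta - \alpha < \beta/10002$ squeezes the aggregate of the $10002$ shells so tightly that, combined with the one-sided liminf-zero property (transferred through $T$-invariance of $m$ to each of the shifted shells $S_n(f)\circ T^{kn}$), at least one shell is forced to violate either the upper envelope $\beta$ on the symmetric sums or the near-zero property of the forward one-sided sums, producing the contradiction. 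The specific constant $10001/10002$ arises as the natural output of a telescope of length $10002$.

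The main obstacle I anticipate is synchronisation: the a.e.\ subsequences realising $\alpha$, realising $\beta$, and realising $\linf S_n(f)/a_n = 0$ are a priori unrelated and $x$-dependent. Matching them on a set of positive measure will require an Egorov-type uniformisation combined with Hopf's ratio convergence; extracting the exact numerical threshold $10001/10002$ from the $10002$-block telescope is the delicate bookkeeping step, and the constant is almost certainly not sharp.
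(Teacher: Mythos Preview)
Your outline has a genuine gap, and it is precisely the ``synchronisation'' issue you flag at the end: in your shell decomposition the forward pieces are $S_n(f)\circ T^{kn}$, evaluated at the \emph{moving} points $T^{kn}x$. The statement $\linf S_n(f)/a_n=0$ is pointwise, with the realising subsequence depending on the point; there is no mechanism in your sketch that makes $S_n(f)(T^{kn}x)/a_n$ small for several $k$'s simultaneously along a common subsequence of $n$'s. Measure-preservation alone does not transfer a liminf along one orbit segment to another, and Egorov only uniformises in $x$, not across the moving base points $T^{kn}x$. Equally serious is that your pigeonhole compares $\Sigma_{jn}(f)/a_{jn}$ for $j=1,\dots,10002$, but nothing in your argument constrains the ratios $a_{jn}/a_n$; without such control the envelopes $[(\alpha-\varepsilon)a_{jn},(\beta+\varepsilon)a_{jn}]$ impose no relation between consecutive shells, so the telescope does not close. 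The constant $10001/10002$ is also not the output of a $10002$-block telescope.

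For comparison, the paper's route is structurally different. It first invokes \cite{Aaronson-Kosloff-Weiss} to deduce that the hypothesis forces $T$ to be bounded rationally ergodic with $a_n\asymp 2a_n(T)$, and then works entirely with the intrinsic return sequence $a(n):=a_n(T)$. The core is a single-orbit argument: one fixes an \emph{admissible pair} $(x,K)$ (a point $x$ in an Egorov set $B$ and a subsequence $K$ along which $S_n(1_B)(x)/(\alpha a(n))\to m(B)$ and $T^nx\in B$), and then proves quantitative two-sided bounds on the growth of $a(n)$ along $K$, namely $\tfrac{2}{25}\lesssim a(n/9)/a(n)\lesssim \tfrac13$ and $a(8n/9)/a(n)\lesssim 0.94$, which are mutually incompatible with the final covering estimate $\Sigma_{8n/9}(1_A)(T^{\mathfrak J_n}x)\geq S_n(1_A)(x)$. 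This yields $\bar\beta(T)-\underline\beta(T)\geq 1/5000$. A separate lemma then shows that for \emph{any} $a_n\to\infty$ one has $\linf/\lsup\leq\sqrt{\underline\beta(T)/\bar\beta(T)}$, and the numeric bound $10001/10002$ comes from $\sqrt{5000/5001}\leq 1-1/10002$. The essential idea your sketch is missing is this control of the return-sequence growth ratios $a(cn)/a(n)$ along a carefully chosen subsequence; that is what replaces the synchronisation you could not achieve.
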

After proving the theorem we give an application to the study of fluctuations
of symmetric Birkhoff integrals of horocyclic flows on geometrically
finite surfaces.

\subsubsection*{Notation}

From now on we will write 
\[
S_{n}^{-}(f):=\sum_{k=1}^{n-1}f\circ T^{-k}=\Sigma_{n}(f)-S_{n}(f).
\]
For eventually positive sequences $a_{n},b_{n}$ we write:
\begin{itemize}
\item $a_{n}\sim b_{n}$ if $\lim_{n\to\infty}\frac{a_{n}}{b_{n}}=1$. 
\item $a_{n}\lesssim b_{n}$ if $\lsup\frac{a_{n}}{b_{n}}\leq1$. 
\item $a_{n}\asymp b_{n}$ if there exists $C>1$ such that $C^{-1}b_{n}\leq a_{n}\leq Cb_{n}$
for all $n\in\NN$. 
\item For an infinite subset $K\subset\NN$, $a_{n}\underset{n\in K}{\lesssim}b_{n}$
if $\lsupK\frac{a_{n}}{b_{n}}\leq1.$
\item $a=b\pm\epsilon$ means $b-\epsilon<a<b+\epsilon$. 
\item Given a standard $\sigma$-finite measure space $\left(X,\BB,m\right)$
and a subcollection of sets $\mathcal{C\subset B}$, we write $\mathcal{C}_{+}$
to be the collection of sets $A\in\mathcal{C}$ of positive measure.
\item $L_{1}(X,m)_{+}$ is the collection of nonnegative integrable functions. 
\item All transformations or flows in this paper are assumed to be invertible.
\end{itemize}

\section{Preliminaries }

\subsubsection*{Bounded Rational Ergodicity }

As in \cite{Aar}, a conservative, ergodic, measure preserving transformation
$\left(X,\BB,m,T\right)$ is called \textit{boundedly rationally ergodic}
(\texttt{BRE}) if $\exists A\in\BB,$ $0<m(A)<\infty$ and $M<\infty$
so that 
\begin{eqnarray}
S_{n}\left(1_{A}\right)(x) & \leq & Ma_{n}(A)\ \text{a.e.}\ on\ A\ \forall n\geq1\label{eq:BRE}\\
 &  & \text{where}\ a_{n}(A)=\sum_{k=0}^{n-1}\frac{m\left(A\cap T^{-k}A\right)}{m(A)^{2}}.\nonumber 
\end{eqnarray}
In this case \cite{Aar}, $\left(X,\BB,m,T\right)$ is \textit{weakly
rationally ergodic} (\texttt{WRE}), that is, writing $a_{n}(T):=a_{n}(A)$
(where $A$ as in (\ref{eq:BRE})), there is a dense hereditary ring
\[
\mathcal{R}(T)\subset\cF:=\left\{ F\in\BB:\ m(F)<\infty\right\} 
\]
(including all sets satisfying (\ref{eq:BRE})) so that 
\[
a_{n}(F)\sim a_{n}\left(T\right)\ \forall F\in\mathcal{R}(T),\ m(F)>0
\]
and 
\[
\sum_{k=0}^{n-1}m\left(F\cap T^{-k}G\right)\sim m(F)m(G)a_{n}(T),\ \ \forall F,G\in\mathcal{R}(T).
\]
For invertible transformations, one can define similarly the two sided
analougs of the properties \texttt{BRE} and \texttt{WRE}. $\left(X,\BB,m,T\right)$
is:
\begin{itemize}
\item two sided, boundedly rationally ergodic if $\exists A\in\BB$, $0<m(A)<\infty$ and $M<\infty$
so that 
\begin{eqnarray}
\Sigma_{n}\left(1_{A}\right)(x) & \leq & M\overline{a}_{n}(A)\ \text{a.e.}\ on\ A\ \forall n\geq1\label{eq: two sided BRE}\\
 &  & \text{where}\ \overline{a}_{n}(A)=\sum_{|k|\leq n}\frac{m\left(A\cap T^{k}A\right)}{m(A)^{2}}\sim2a_{n}(A).\nonumber 
\end{eqnarray}

\item two sided, weakly rationally ergodic if there is a dense hereditary
ring 
\[
\overline{\mathcal{R}}(T)\subset\cF
\]
(including all sets satisfying (\ref{eq: two sided BRE})) so that
\[
\overline{a}_{n}(F)\sim2a_{n}\left(T\right)\ \forall F\in\overline{\mathcal{R}}(T),\ m(F)>0.
\]

\end{itemize}
If $\left(X,\BB,m,T\right)$ is one sided \texttt{BRE} then so is
$\left(X,\BB,m,T^{-1}\right)$. This can be seen for example by the
fact that if $A\in\BB$ is the set along which (\ref{eq:BRE}) holds,
then for $n\in\mathbb{N}$ and $x\in A$ one can define 
\[
k(x,n):=\max\left\{ k\in\NN\cup\{0\}:\ k< n\ \text{and}\ T^{-k}x\in A\right\} .
\]
It then follows that for all $n\in\mathbb{N}$ and $x\in A$, 
\begin{eqnarray*}
\sum_{k=-n+1}^{0}1_{A}\circ T^{k}(x) & = & S_{k(x,n)}\left(1_{A}\right)\circ T^{-k(x,n)}(x)\\
 & \overset{\eqref{eq:BRE}}{\leq} & Ma_{k(x,n)}(A)\ \ \text{since}\ T^{-k(x,n)}x\in A\\
 & \leq & Ma_{n}(A),\ \ \ \ \text{since}\ k(x,n)\leq n.
\end{eqnarray*}
Therefore in the case of invertible transformations:
\begin{itemize}
\item $\left(X,\BB,m,T\right)$ is two sided \texttt{BRE} if and only if
it is (one sided) bounded rationally ergodic.
\item If $\left(X,\BB,m,T\right)$ is two sided \texttt{BRE,} then it is
two sided \texttt{WRE} and for $F,G\in\bar{\mathcal{R}}\left(T\right)$,
\begin{eqnarray}
\int_{F}\frac{\Sigma_{n}\left(1_{G}\right)}{2a_{n}(T)}dm & = & \frac{1}{2a_{n}\left(T\right)}\sum_{k=-n}^{n}m\left(F\cap T^{-k}G\right)\label{eq: weak rational ergodicity for symmetric sums}\\
 & \sim & m(F)m(G)\ \text{as}\ n\to\infty.\nonumber 
\end{eqnarray}

\end{itemize}

\subsubsection{Some observations:}

Let $\left(X,\BB,m,T\right)$ be a conservative, ergodic measure preserving
transformation.
\begin{enumerate}
\item By the ratio ergodic theorem, for all $f,g\in L_{1}(X,m)$ with $g>0$,
\[
\frac{S_{n}\left(f\right)}{S_{n}(g)}(x)\xrightarrow[n\to\infty]{}\frac{\int_{X}fdm}{\int_{X}gdm},\ \text{for a.e.}\ x
\]
and by a similiar argument for $T^{-1}$, 
\[
\frac{\Sigma_{n}\left(f\right)}{\Sigma_{n}(g)}(x)\xrightarrow[n\to\infty]{}\frac{\int_{X}fdm}{\int_{X}gdm},\ \text{for a.e.}\ x.
\]
A consequence of this is that in order to check if (\ref{eq: Symmetric boundedness})
holds for a sequence $a_{n}\to\infty$, it is enough to check if it
holds for one function $f\in L_{1}\left(X,m\right)_{+}$. Variants
of this application of the ratio ergodic theorem appear throughout
this work.
\item For $a_{n}\to\infty$ and $f\in L_{1}(X,m)_{+},$ the functions $\lsup\frac{\Sigma_{n}(f)}{a_{n}}$
and $\linf\frac{\Sigma_{n}(f)}{a_{n}}$ are $T$ invariant, hence
constant almost everywhere. 
\item As in the one sided case $\left(X,\BB,m,T\right)$ is two sided \texttt{BRE
}if and only if for all $f\in L_{1}(X,m)_{+}$,
\[
\lsup\frac{1}{2a_{n}(T)}\Sigma_{n}(f)<\infty\ a.e.
\]

\end{enumerate}
In case $T$ is bounded rationally ergodic, there exists $\underline{\beta}=\underline{\beta}(T)\in[0,1],$
$\alpha=\alpha(T)$ and $\bar{\beta}=\overline{\beta}(T)\in[1,\infty)$
so that $\forall f\in L_{1}(X,m)_{+}$ for $m$ a.e. $x$: 
\begin{eqnarray*}
\lsup\frac{1}{a_{n}(T)}S_{n}(f)(x) & = & \alpha\int_{X}fdm\\
\lsup\frac{1}{2a_{n}(T)}\Sigma_{n}(f)(x) & = & \ubeta\int_{X}fdm\\
\linf\frac{1}{2a_{n}(T)}\Sigma_{n}(f)(x) & = & \lbeta\int_{X}fdm.
\end{eqnarray*}

We will make use of the following proposition from \cite{Aaronson-Kosloff-Weiss}. 
\begin{prop}\label{prop: AKW}
\cite[Prop. 1]{Aaronson-Kosloff-Weiss} Let $(X,\BB,m,T)$ be an invertible,
conservative, ergodic, measure preserving transformation. \\
(i) If $T$ satisfies (\ref{eq: Symmetric boundedness}) w.r.t. to
some normalizing constants $a_{n}\to\infty$, then $T$ is bounded
rationally ergodic and $a_{n}\asymp2a_{n}(T)$. \\
(ii) If $T$ is bounded rationally ergodic, then 
\[
\alpha(T)=\alpha\left(T^{-1}\right),
\]
whence 
\begin{eqnarray}
\ubeta(T) & \leq & \alpha\left(T\right)\leq2\ubeta\left(T\right)\ \text{and}\label{eq: beta gag is larger than alpha}\\
\lbeta(T) & \leq & \frac{\alpha(T)}{2}.\label{eq: beta rizpa is smaller than alpha/2}
\end{eqnarray}
\end{prop}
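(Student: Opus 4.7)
The plan is to establish (i) by Egorov/Fatou bookkeeping that pins down $a_n\asymp 2a_n(T)$, and (ii) by a clean time-reversal substitution on visit times that identifies $\alpha(T)$ and $\alpha(T^{-1})$. For (i), observation (1) of the excerpt transfers the hypothesis from $f$ to $1_A$ for every $A\in\cF_+$, with $\linf\Sigma_n(1_A)/a_n$ and $\lsup\Sigma_n(1_A)/a_n$ equal to positive constants $c_A,C_A$ by ergodicity and $T$-invariance. Choose via Egorov a positive-measure $A'\subset A$ on which $\sup_n\Sigma_n(1_A)/a_n\leq 2C_A$; integrating over $A'$ and using $A'\subset A$ (absorbing the $O(1)$ endpoint-mismatch between $\Sigma_n$ and $\bar a_n$) yields
\[
m(A')^2\,\bar a_n(A')\leq 3C_A\,m(A')\,a_n,
\]
so $\bar a_n(A')/a_n$ is uniformly bounded above. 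For the matching lower bound, observation (1) gives $\linf\Sigma_n(1_{A'})/a_n=(m(A')/m(A))c_A>0$ a.e., and Fatou on $A'$ yields $\linf\bar a_n(A')/a_n>0$. Combined with $\bar a_n(A')\sim 2a_n(A')$, we get $a_n\asymp 2a_n(A')$; since $S_n(1_{A'})\leq\Sigma_n(1_A)\leq 2C_A\,a_n$ on $A'$, the set $A'$ witnesses \texttt{BRE} with $a_n(T):=a_n(A')$ and $a_n\asymp 2a_n(T)$.

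For (ii), $a_n(T^{-1})=a_n(T)$ is immediate from $m(A\cap T^kA)=m(A\cap T^{-k}A)$ (apply $T^{-k}$), so the same normalization defines $\alpha(T^{-1})$. For the key equality $\alpha(T)=\alpha(T^{-1})$, fix a BRE witness $A$; for $m$-a.e.\ $y\in A$, let $0<m_1(y)<m_2(y)<\cdots$ be its backward $A$-visit times (defined by recurrence) and set $x_k(y):=T^{-m_k(y)}y\in A$. Then the $k$-th forward $A$-visit of $x_k(y)$ occurs exactly at time $m_k(y)$, so $S_{m_k(y)}(1_A)(x_k(y))=k$. Since $y\mapsto x_k(y)$ is measurable and $T$-invariance preserves nullsets, for every $k$ and $m$-a.e.\ $y\in A$ the point $x_k(y)$ lies in the conull set where $\lsup S_n(1_A)/a_n(T)=\alpha(T)m(A)$, so
\[
\frac{k}{a_{m_k(y)}(T)}\;=\;\frac{S_{m_k(y)}(1_A)(x_k(y))}{a_{m_k(y)}(T)}\;\leq\;\alpha(T)\,m(A).
\]
Since $S_n^-(1_A)(y)\leq j$ for $n\leq m_{j+1}(y)$ and $a_n(T)$ is nondecreasing, $\sup_n S_n^-(1_A)(y)/a_n(T)\leq\sup_k k/a_{m_k(y)}(T)\leq\alpha(T)m(A)$, hence $\alpha(T^{-1})m(A)=\lsup S_n^-(1_A)(y)/a_n(T)\leq\alpha(T)m(A)$. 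The swap $T\leftrightarrow T^{-1}$ yields equality.

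The $\ubeta,\lbeta$ inequalities follow from $\Sigma_n=S_n+S_n^-$ at $f=1_A$:
\[
2\ubeta\,m(A)=\lsup\frac{\Sigma_n(1_A)}{a_n(T)}\leq\lsup\frac{S_n(1_A)}{a_n(T)}+\lsup\frac{S_n^-(1_A)}{a_n(T)}=2\alpha(T)m(A),
\]
giving $\ubeta\leq\alpha(T)$, while $\alpha(T)\leq 2\ubeta$ is immediate from $\Sigma_n\geq S_n$. For (\ref{eq: beta rizpa is smaller than alpha/2}), Aaronson's no-ergodic-theorem forces $\linf S_n(1_A)/a_n(T)=0$ (since the $\lsup$ is finite), so $\linf\Sigma_n/a_n(T)\leq\lsup S_n^-/a_n(T)=\alpha(T)m(A)$, giving $\lbeta\leq\alpha(T)/2$. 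The main obstacle is the identity $\alpha(T)=\alpha(T^{-1})$: finding the right measurable substitution $y\mapsto x_k(y)$ on visit times is what decouples forward and backward $\lsup$'s, and everything then reduces to confirming that $\sup_n S_n^-(1_A)(y)/a_n(T)$ is controlled by the visit-time sequence $(k/a_{m_k(y)}(T))_k$.
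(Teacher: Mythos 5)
Your Part (i) is essentially sound: Egorov to get a positive-measure $A'\subset A$ on which $\Sigma_n(1_A)/a_n$ is eventually uniformly bounded, integration over $A'$ for the upper bound on $\bar a_n(A')/a_n$, and Fatou for the lower bound all work. (One should absorb the non-monotonicity of $a_n$ and the off-by-one mismatch between $\int_{A'}\Sigma_n(1_{A'})\,dm$ and $m(A')^2\bar a_n(A')$ using $a_n\to\infty$, but these are routine.)

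Part (ii) has a genuine gap at the key step $\alpha(T)=\alpha(T^{-1})$. You argue that since $x_k(y)=T^{-m_k(y)}y$ lies in the conull set where $\lsup S_n(1_A)/a_n(T)=\alpha(T)m(A)$, it follows that
\[
\frac{k}{a_{m_k(y)}(T)}=\frac{S_{m_k(y)}(1_A)(x_k(y))}{a_{m_k(y)}(T)}\leq\alpha(T)\,m(A)\qquad\text{for all }k.
\]
This does not follow. Knowing $\lsup_n S_n(1_A)(z)/a_n(T)=\alpha(T)m(A)$ at a point $z$ only bounds $S_n(1_A)(z)/a_n(T)$ by $(\alpha(T)+\epsilon)m(A)$ for $n\geq N(z,\epsilon)$, and the threshold $N$ depends on $z$. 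Here $z=x_k(y)$ changes with $k$, and you evaluate at the one specific time $n=m_k(y)$, with no control relating $m_k(y)$ to $N(x_k(y),\epsilon)$. The only bound valid for all $n$ simultaneously is the \texttt{BRE} constant $M$, and that route only yields $\alpha(T^{-1})\leq M$ (i.e.\ that $T^{-1}$ is \texttt{BRE}, which the paper already records), not the sharp identity. To repair it you need an Egorov step before the time-reversal: fix $\epsilon>0$, choose $B\subset A$ with $m(A\setminus B)<\epsilon$ and $N_0$ such that $S_n(1_A)(z)\leq(\alpha(T)+\epsilon)m(A)\,a_n(T)$ for all $z\in B$, $n\geq N_0$; then take backward visit times to $B$ rather than to $A$. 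With $k(y,n):=\max\{j<n:\ T^{-j}y\in B\}$, for a.e.\ $y\in B$ one has $T^{-k(y,n)}y\in B$ and $k(y,n)\to\infty$, so
\[
S_n^{-}(1_B)(y)\leq S_{k(y,n)+1}(1_A)\bigl(T^{-k(y,n)}y\bigr)\lesssim(\alpha(T)+\epsilon)m(A)\,a_n(T),
\]
whence $\alpha(T^{-1})m(B)\leq(\alpha(T)+\epsilon)m(A)$; letting $\epsilon\to0$ gives $\alpha(T^{-1})\leq\alpha(T)$, and swapping $T\leftrightarrow T^{-1}$ finishes. Your derivations of $\ubeta\leq\alpha\leq2\ubeta$ and of $\lbeta\leq\alpha/2$ from $\alpha(T)=\alpha(T^{-1})$ together with Aaronson's non-normalizability theorem are then correct.
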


A consequence of this proposition
is that using the convention that $\frac{a}{\infty}=0$ for all $0\leq a<\infty$,
if $T$ is not bounded rationally ergodic then for any $a_{n}\to\infty$
and $f\in L_{1}\left(X,m\right)_{+}$ either $\linf\frac{1}{a_{n}}\Sigma_{n}(f)(x)=0$
or $\lsup\frac{1}{2a_{n}(T)}\Sigma_{n}(f)(x)=\infty$. Therefore,
in order to finish the proof of Theorem 1, we need only consider bounded
rationally ergodic transformations.

\section{A gap between the limit inferior and the limit superior of symmetric
Birkhoff sums for \texttt{BRE} transformations}
\begin{thm}
\label{thm: effective divergence for symmetric sums}Let $\left(X,\BB,m,T\right)$
be an infinite, invertible, conservative, ergodic, bounded rationally ergodic, measure preserving
transformation, then 
\[
\ubeta(T)-\underbar{\ensuremath{\beta}}\left(T\right)\geq\frac{1}{5000}.
\]
\end{thm}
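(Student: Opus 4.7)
My plan is to prove the theorem by contradiction: assume that $\overline{\beta}(T)-\underline{\beta}(T)<\tfrac{1}{5000}$ and derive an inconsistency with the weak rational ergodicity identity \eqref{eq: weak rational ergodicity for symmetric sums}. First I would localise the three constants $\underline{\beta}$, $\overline{\beta}$ and $\alpha=\alpha(T)$. Applying Fatou's lemma together with its dominated reverse version (the domination coming from the two-sided BRE bound on $A$ in \eqref{eq: two sided BRE}) to \eqref{eq: weak rational ergodicity for symmetric sums} with $F=G=A$ yields $\underline{\beta}\le 1\le\overline{\beta}$. Combined with the contradiction hypothesis and Proposition~\ref{prop: AKW}, this forces $\overline{\beta}\in[1,1+\tfrac{1}{5000}]$, $\underline{\beta}\in[1-\tfrac{1}{5000},1]$ and $\alpha\in(2-\tfrac{1}{2500},2+\tfrac{1}{2500})$; in particular both $2\overline{\beta}-\alpha$ and $\alpha-2\underline{\beta}$ are at most $\tfrac{4}{5000}$.

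Next I would exploit the splitting $\Sigma_n(1_A)=S_n(1_A)+S_n^-(1_A)$ against the limsup description of $\alpha$. For a.e.\ $x\in A$ there is a subsequence $n_k=n_k(x)\to\infty$ along which $S_{n_k}(1_A)(x)/a_{n_k}(T)\to\alpha\,m(A)$. On this subsequence the pointwise upper bound $\Sigma_{n_k}(1_A)\le(2\overline{\beta}+o(1))a_{n_k}(T)m(A)$ forces
\[
S_{n_k}^-(1_A)(x)\;\le\;(2\overline{\beta}-\alpha+o(1))\,a_{n_k}(T)\,m(A)\;\le\;\bigl(\tfrac{4}{5000}+o(1)\bigr)a_{n_k}(T)\,m(A),
\]
so whenever the forward one-sided sum $S_n$ is near its limsup value, the backward one-sided sum $S_n^-$ is quantitatively close to zero; by symmetry (using $\alpha(T^{-1})=\alpha(T)$), the analogous statement holds with $S_n$ and $S_n^-$ interchanged.

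The third step is to upgrade this pointwise-on-a-subsequence estimate to an integrated statement that contradicts the one-sided weak rational ergodicity of $T^{-1}$, which gives $\int_A S_n^-(1_A)\,dm\sim a_n(T)m(A)^2$. The plan is to use a Lusin/Egorov-type argument on the BRE set $A$ to extract a subsequence $n_k\to\infty$ together with subsets $E_{n_k}\subset A$ of measure close to $m(A)$ on which the estimate of Step~2 holds uniformly. Splitting $\int_A=\int_{E_{n_k}}+\int_{A\setminus E_{n_k}}$, bounding the first integral by Step~2 and the second by the two-sided BRE bound (which dominates $S_n^-(1_A)$ on $A$ by the remark following \eqref{eq: two sided BRE}), one reaches $\int_A S_n^-(1_A)\,dm\le\bigl(\tfrac{4}{5000}+o(1)\bigr)a_{n_k}(T)m(A)^2$, incompatible with the WRE limit $1$, the required contradiction.

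The main technical obstacle is precisely the Lusin/Egorov step: the pointwise $\limsup$ characterisation of $\alpha$ does not a priori guarantee ``good sets'' $E_n$ of controlled measure at a given time $n$, so producing a subsequence along which $m(E_{n_k})$ is comparable to $m(A)$ presumably requires either a Hopf-type maximal ergodic inequality or a selection driven by the one-sided WRE of $T^{-1}$ itself. The explicit numerical constant $\tfrac{1}{5000}$ in the statement is presumably tuned precisely to absorb the inevitable measure losses in that extraction.
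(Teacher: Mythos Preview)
Your Steps~1 and~2 are fine and indeed coincide with the opening of the paper's proof. The gap is at Step~3, and it is not a technicality that more careful bookkeeping can repair: the very WRE identity you want to contradict already prevents the sets $E_{n}$ from having measure close to $m(A)$. Indeed, by one-sided WRE, $\int_{A}S_{n}(1_{A})\,dm\sim a_{n}(T)m(A)^{2}$, so if $E_{n}=\{x\in A:\,S_{n}(1_{A})(x)\ge(\alpha-\varepsilon)a_{n}(T)m(A)\}$ then
\[
(\alpha-\varepsilon)\,a_{n}(T)\,m(A)\,m(E_{n})\;\le\;\int_{A}S_{n}(1_{A})\,dm\;\sim\;a_{n}(T)\,m(A)^{2},
\]
forcing $m(E_{n})\lesssim m(A)/(\alpha-\varepsilon)\approx m(A)/2$. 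No Egorov, Hopf, or WRE-driven selection can beat this upper bound, because it holds for \emph{every} $n$. With $m(A\setminus E_{n})\gtrsim m(A)/2$ and only the bound $S_{n}^{-}(1_{A})\le\Sigma_{n}(1_{A})\lesssim(2+2\delta)a_{n}(T)m(A)$ available there, your splitting gives at best
\[
\int_{A}S_{n}^{-}(1_{A})\,dm\;\lesssim\;\tfrac{4}{5000}\,a_{n}m(A)\cdot\tfrac{m(A)}{2}\;+\;(2+2\delta)\,a_{n}m(A)\cdot\tfrac{m(A)}{2}\;\approx\;a_{n}(T)m(A)^{2},
\]
which is exactly the WRE value, not a contradiction. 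The underlying phenomenon is that the event ``$S_{n}\approx\alpha a_{n}m(A)$'' is rare in space for each fixed $n$; on average over $A$ the forward and backward sums are balanced, and the extreme imbalance of Step~2 occurs only along $x$-dependent subsequences that cannot be synchronised.

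The paper's argument is of a completely different nature: it is a \emph{single-orbit} analysis. One fixes a generic point $x$ and a subsequence $K$ along which $S_{n}(1_{B})(x)\sim\alpha a(n)m(B)$ and $T^{n}x\in B$, and then uses the uniform two-sided bounds on $B$ at the shifted points $T^{j}x$ (for various $j\le n$) to squeeze out growth constraints on the return sequence itself, namely $\tfrac{2}{25}\lesssim a(n/9)/a(n)\lesssim\tfrac{1}{3}$ and $a(8n/9)/a(n)\lesssim 0.94$ along $K$. The contradiction comes from producing a return time $\mathfrak{J}_{n}\in[n/9,\,8n/9]$ with $T^{\mathfrak{J}_{n}}x\in B$, so that $[0,n]\subset[\mathfrak{J}_{n}-8n/9,\,\mathfrak{J}_{n}+8n/9]$ and hence $S_{n}(1_{A})(x)\le\Sigma_{8n/9}(1_{A})(T^{\mathfrak{J}_{n}}x)$, which forces $a(8n/9)/a(n)\gtrsim(1-\delta)/(1+\delta)>0.94$. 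The constant $\tfrac{1}{5000}$ is tuned to these return-sequence inequalities, not to any measure loss in an Egorov extraction.
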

\begin{rem}
The constant $\delta:=\frac{1}{5000}$ was chosen so that 
\begin{equation}
\left(1-50\delta\right)\leq0.99\label{eq: 1-48epsilon is less than 0.99}
\end{equation}
and 
\begin{equation}
\frac{1}{2}\left(\frac{100}{99}\right)^{2}\frac{1+\delta}{1-\delta}\leq\frac{1}{\sqrt{3}}.\label{eq: (1000)/99^2 ... lestt than 1/sqrt(3)}
\end{equation}
We would like to point out that by a more careful bookkeeping one
can obtain a better constant for $\delta$. This will amount in more
technical arguments which we chose not to follow. As for now, we don't
know of any examples with $\overline{\beta}-\underline{\beta}<\frac{1}{2}$,
it is interesting to find out what is the minimal $\delta$ so that
there exists a conservative, ergodic infinite measure preserving transformation
$T$ with $\delta=\overline{\beta}(T)-\underline{\beta}\left(T\right)$. 
\end{rem}
\textsl{Proof:} Suppose otherwise that 
\[
\ubeta(T)-\lbeta(T)<\delta:=\frac{1}{5000}.
\]
and let
$a(n):=a_{n}(T)$.

Since $\lbeta(T)\leq1$ and $\ubeta(T)\geq1$, 
\[
1-\delta<\lbeta(T)\leq1\leq\ubeta(T)<1+\delta.
\]
Consequently for all $A\in\cF_{+}$, a.e. on $X$, 
\begin{equation}
\left(1-\delta\right)m(A)\lesssim\frac{1}{2a_{n}(T)}\Sigma_{n}(1_{A})\lesssim\left(1+\delta\right)m(A).
\label{eq:behavior of symmetric sums}
\end{equation}
We claim that 
\begin{equation}
2-2\delta<\alpha:=\alpha(T)<2+2\delta\label{eq:alpha is bounded above and below}
\end{equation}
Indeed, by (\ref{eq: beta rizpa is smaller than alpha/2}) , $\alpha\geq2\lbeta>2-2\delta$
and by (\ref{eq: beta gag is larger than alpha}) $\alpha\leq2\ubeta<2+2\delta$. 

The rest of the proof is a quantitative version of the ``single orbit''
argument in \cite{Aaronson-Kosloff-Weiss}, which we proceed to specify. 
\begin{itemize}
\item Fix $A\in\cF_{+}$. By Egorov there exists $B\in\cF_{+}\cap A$, $m(B)>\frac{3}{4}m\left(A\right)$
and $N_{0}\in\NN$ so that for all $n\geq N_{0}$ and $x\in B$, 
\[
\left(2-2\delta\right)m(A)\leq\sup_{N\geq n}\frac{1}{a(N)}S_{N}\left(1_{A}\right)(x)\leq\left(2+2\delta\right)m(A),
\]
and 
\begin{equation}
\left(2-2\delta\right)a(n)m(A)\leq\Sigma_{n}\left(1_{A}\right)(x)\leq\left(2+2\delta\right)a(n)m(A).
\label{eq: boundedness of Sigma_n and supS_n}
\end{equation}

\item Call a point $x\in B$ \textit{admissible }if 
\begin{align*}
\tag{A1} & \frac{S_{n}\left(1_{A}\right)(x)}{S_{n}\left(1_{B}\right)(x)}\xrightarrow[n\to\infty]{}\frac{m(A)}{m(B)};\\
\tag{A2} & \frac{1}{2a(n)}\Sigma_{n}\left(1_{B}\right)(x)=(1\pm\delta)m(B),\ \text{for all}\ n\geq N_{0}\text{;}\\
\tag{A3} & \sup_{N\geq n}\frac{1}{\alpha a\left(N\right)}S_{N}\left(1_{B}\right)(x)\xrightarrow[n\to\infty]{}m\left(B\right),
\end{align*}
and there exists $K\subset\NN$, an $x$- \textit{admissible subsequence}
in the sense that
\begin{align*}
\tag{A4} & T^{n}x\in B,\ \forall n\in K\ \text{and}\\
\tag{A5} & \frac{1}{\alpha a\left(n\right)}S_{n}\left(1_{B}\right)\left(x\right)\xrightarrow[n\to\infty,\ n\in K]{}m\left(B\right).
\end{align*}

\end{itemize}
An admissible pair is $\left(x,K\right)\in B\times2^{\NN}$ where
$x$ is an admissible point and $K$ is an $x$-admissible subsequence. 

Note that if $\left(x,K\right)$ is an admissible pair, then by $\left(\mbox{A1}\right)$
and $\left(\mbox{A5}\right)$ 
\[
\frac{1}{\alpha a\left(n\right)}S_{n}\left(1_{A}\right)\left(x\right)\xrightarrow[n\to\infty,\ n\in K]{}m\left(A\right).
\]

\begin{lem}
Almost every $x\in B$ is admissible. \end{lem}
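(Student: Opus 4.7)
The plan is to verify the five admissibility conditions for $m$-a.e. $x\in B$, handling the three pointwise conditions (A1)--(A3) first and then constructing an admissible subsequence $K$ to produce (A4)--(A5).

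Conditions (A1)--(A3) are essentially immediate from the general theory. (A1) is Hopf's ratio ergodic theorem applied to $1_A$ and $1_B$. For (A2), the standing hypothesis $\ubeta-\lbeta<\delta$, together with $\lbeta\leq 1\leq\ubeta$, gives $\lbeta>1-\delta$ and $\ubeta<1+\delta$, so for $m$-a.e.\ $x$,
\[
(1-\delta)m(B)<\liminf_{n\to\infty}\frac{\Sigma_n(1_B)(x)}{2a(n)}\leq\limsup_{n\to\infty}\frac{\Sigma_n(1_B)(x)}{2a(n)}<(1+\delta)m(B),
\]
and an Egorov argument (enlarging $N_0$ and, if necessary, passing to a full-measure subset of $B$) produces a uniform threshold. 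For (A3), by definition of $\alpha$ we have $\limsup_{N\to\infty}\frac{S_N(1_B)(x)}{a(N)}=\alpha m(B)$ almost everywhere, and $\sup_{N\geq n}$ decreases monotonically to $\limsup$ as $n\to\infty$.

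The main step is constructing $K$. Fix $x\in B$ satisfying (A1)--(A3) which also returns to $B$ infinitely often (a.e.\ by conservativity and ergodicity). Using (A3), choose $m_k\to\infty$ with $\frac{S_{m_k}(1_B)(x)}{\alpha a(m_k)}\to m(B)$, and set
\[
n_k:=\max\{j<m_k:T^jx\in B\},
\]
which is well-defined for all large $k$. By maximality, $S_{n_k}(1_B)(x)=S_{m_k}(1_B)(x)-1$, and since $a(\cdot)$ is non-decreasing with $n_k<m_k$,
\[
\frac{S_{n_k}(1_B)(x)}{\alpha a(n_k)}\geq\frac{S_{m_k}(1_B)(x)-1}{\alpha a(m_k)}\longrightarrow m(B).
\]
Combined with the reverse bound $\frac{S_n(1_B)(x)}{\alpha a(n)}\leq m(B)+o(1)$ from (A3), this forces $\frac{S_{n_k}(1_B)(x)}{\alpha a(n_k)}\to m(B)$, so $K:=\{n_k\}$ is an $x$-admissible subsequence. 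The key observation, and the only substantive point, is that replacing the nearly optimal times $m_k$ by the adjacent return times $n_k$ changes $S_n(1_B)$ by exactly one unit, while the monotonicity of $a(n)$ together with the two-sided control built into (A3) preserves the ratio in the limit.
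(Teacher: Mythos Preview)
Your proof is correct and follows essentially the same route as the paper: (A1)--(A3) are dispatched via the ratio ergodic theorem, the definition of $\alpha$, and the standing bounds on symmetric sums, and the admissible subsequence is built by taking times $m_k$ along which $S_{m_k}(1_B)(x)/(\alpha a(m_k))\to m(B)$ and replacing each by the last prior return to $B$. The verification of (A5) for the modified sequence---using monotonicity of $a(\cdot)$ together with the upper bound from (A3) to squeeze the ratio---is exactly the paper's argument (the paper writes $k'_n=\max\{j\le k_n:T^jx\in B\}$ and observes $S_{k'_n}(1_B)=S_{k_n}(1_B)$ up to a unit, just as you do).
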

\begin{proof}
By  (\ref{eq: boundedness of Sigma_n and supS_n}), the definition of $\alpha$
and the ratio ergodic theorem, almost every $x\in B$ satisfies $\left(\mbox{A1}\right)$,
$\left(\mbox{A2}\right)$ and $\left(\mbox{A3}\right)$. 

Also since $\alpha=\alpha(T)<\infty$, for a.e. $x\in B$, $\exists K\subset\NN$
satisfying $\left(\mbox{A5}\right)$. 

We claim that if $K:=\left\{ k_{n}:\ n\geq1\right\} $, $k_{n}\uparrow$,
then $K':=\left\{ k'_{n}:\ n\geq1\right\} $ where $k'_{n}=\max\left\{ j\leq k_{n}:\ T^{j}x\in B\right\} $
is $x$-admissible. Evidently $K'$ is infinite and satisfies $\left(\mbox{A4}\right)$.
To check $\left(\mbox{A5}\right)$ for $K'$: 
\[
\alpha a\left(k_{n}\right)m\left(B\right)\geq\alpha a\left(k'_{n}\right)m\left(B\right)\overset{\left(\mbox{A3}\right)}{\gtrsim}S_{k_{n}'}\left(1_{B}\right)(x)=S_{k_{n}}\left(1_{B}\right)(x)\overset{_{\left(\mbox{A5}\right)}}{\sim}\alpha a\left(k_{n}\right)m\left(B\right).
\]
 This shows that 
\[
a\left(k'_{n}\right)\sim a\left(k_{n}\right),\ \text{as}\ n\to\infty
\]
 and that 
\[
\frac{1}{\alpha a\left(k_{n}'\right)}S_{k'_{n}}\left(1_{B}\right)(x)\xrightarrow[n\to\infty]{}m(B).
\]

\end{proof}
The proof goes as follows. Lemmas \ref{lem: Bound from below implies bound from above},
\ref{lem: Lower bound and upper bound} and \ref{lem: bound on a(8n/9)}
deal with some consequences of the definition of admissable pairs
$(x,K)$ on the growth of the return sequence $a(n)$ along $n\in K$.
Then we fix an admissible pair $(x,K)$ and use these Lemmas to arrive
to a contradiction. 
\begin{lem}
\label{lem: Bound from below implies bound from above}Let $\rho\in(0,1/2)$.
If $x\in B,\ K\subset\NN$ and $\left\{ J_{n}:\ n\in K\right\} \subset\mathbb{N}$
satisfy 
\begin{align*}
 & \frac{1}{\alpha a(n)}S_{n}\left(1_{A}\right)(x)\xrightarrow[n\to\infty,\ n\in K]{}m(A);\\
 & n\geq J_{n}\xrightarrow[n\to\infty,\ n\in K]{}\infty;\\
 & \linfK\frac{a\left(J_{n}\right)}{a(n)}\geq\rho,
\end{align*}
then 
\[
\frac{1}{\alpha a\left(J_{n}\right)}S_{J_{n}}\left(1_{A}\right)(x)\underset{_{n\in K}}{\gtrsim}m(A)\left(1-\frac{4\delta}{\rho}\right).
\]
\end{lem}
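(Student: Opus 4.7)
The plan is to fix an admissible pair $(x,K)$ satisfying the three hypotheses and decompose
\[
S_{J_n}(1_A)(x) \;=\; \Sigma_{J_n}(1_A)(x) \;-\; S_{J_n}^{-}(1_A)(x).
\]
I would bound the symmetric sum on the right from below using the narrow window forced by the hypothesis $\ubeta-\lbeta<\delta$, and control the backward sum as a lower-order error. The quantitative engine is that the gap hypothesis pins $\Sigma_n(1_A)(x)/(2a(n))$ to lie within $\pm\delta$ of $m(A)$ for admissible $x$, while the assumption $S_n(1_A)(x)/(\alpha a(n))\to m(A)$ along $K$, combined with $\alpha>2-2\delta$ from \eqref{eq:alpha is bounded above and below}, leaves only an order $4\delta$ fraction of the symmetric mass available to the backward piece.

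First I would use the ratio ergodic theorem for symmetric sums ($\Sigma_n(1_A)/\Sigma_n(1_B)\to m(A)/m(B)$ a.e.) to promote (A2) from $1_B$ to $1_A$, obtaining for admissible $x$
\[
(1-\delta)\cdot 2a(n)m(A)(1+o(1)) \;\leq\; \Sigma_n(1_A)(x) \;\leq\; (1+\delta)\cdot 2a(n)m(A)(1+o(1)).
\]
Subtracting the asymptotic $S_n(1_A)(x)\sim \alpha a(n)m(A)$ valid along $K$ gives
\[
S_n^{-}(1_A)(x) \;\underset{n\in K}{\lesssim}\; \bigl(2(1+\delta)-\alpha\bigr) a(n)m(A) \;\leq\; 4\delta\, a(n)m(A).
\]
Since $J_n\leq n$, the monotonicity $S_{J_n}^{-}(1_A)(x)\leq S_n^{-}(1_A)(x)$ transfers this bound to the index $J_n$, and the lifted lower bound on $\Sigma_{J_n}(1_A)(x)$ (which is licit because $J_n\to\infty$ along $K$) then gives
\[
\frac{S_{J_n}(1_A)(x)}{\alpha a(J_n)} \;\underset{n\in K}{\gtrsim}\; \frac{2(1-\delta)m(A)}{\alpha} \;-\; \frac{4\delta m(A)}{\alpha}\cdot\frac{a(n)}{a(J_n)}.
\]
Using $\limsup_{n\in K} a(n)/a(J_n)\leq 1/\rho$ and the two-sided bound $\alpha\in(2-2\delta,2+2\delta)$, the conclusion reduces to an elementary inequality in $\delta$ and $\rho$ that is easy to verify for $\rho\in(0,1/2)$ and $\delta=1/5000$ by checking the two endpoints in $\alpha$ and noting the monotonicity of the left-hand side in $\alpha$.

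The main obstacle is the step above that converts the symmetric gap $\ubeta-\lbeta<\delta$ into the one-sided bound $S_n^{-}(1_A)(x)\lesssim 4\delta\, a(n)m(A)$ along $K$; this is the only place where the small parameter $\delta$ enters nontrivially, and everything after it is algebra together with the trivial monotonicity $S_{J_n}^{-}\leq S_n^{-}$. The residual cost $4\delta/\rho$ in the conclusion, rather than $4\delta$, is simply the price of absorbing a backward error of size $a(n)$ into a main term scaled by $a(J_n)\asymp \rho\, a(n)$.
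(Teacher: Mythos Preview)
Your argument is correct and follows essentially the same route as the paper: decompose $S_{J_n}=\Sigma_{J_n}-S_{J_n}^{-}$, use the upper symmetric bound together with $S_n(1_A)\sim\alpha a(n)m(A)$ along $K$ to get $S_n^{-}(1_A)(x)\underset{n\in K}{\lesssim}4\delta\,a(n)m(A)$, pass to $J_n$ by monotonicity of $S^{-}$, and combine with the lower symmetric bound at scale $J_n$ and $a(J_n)/a(n)\gtrsim\rho$. One small simplification: you need not invoke admissibility or route the $\Sigma_n(1_A)$ bounds through (A2) and the ratio ergodic theorem, since the Egorov step \eqref{eq: boundedness of Sigma_n and supS_n} already gives $(2-2\delta)a(n)m(A)\leq\Sigma_n(1_A)(x)\leq(2+2\delta)a(n)m(A)$ directly for every $x\in B$ and $n\geq N_0$; this is exactly what the lemma's hypothesis $x\in B$ buys you, and it also makes the final elementary inequality cleaner (the paper dispatches it via $\frac{2-2\delta}{\alpha}\geq 1-2\delta>1-\delta/\rho$ and $\alpha>3/2$).
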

\begin{proof}
Since $x\in B$, for $n\in K$ large 
\[
S_{n}\left(1_{A}\right)(x)+S_{n}^{-}\left(1_{A}\right)(x)=\Sigma_{n}\left(1_{A}\right)(x)\lesssim2\left(1+\delta\right)a(n)m(A).
\]
Consequently by (A1) and (A5), 
\[
S_{n}\left(1_{A}\right)(x)\sim\alpha a(n)m(A)
\]
and 
\begin{eqnarray*}
S_{n}^{-}\left(1_{A}\right)(x) & \underset{_{n\in K}}{\lesssim} & \left[2+2\delta-\alpha\right]a(n)m(A)\\
 & \leq & 4\delta a(n)m(A).
\end{eqnarray*}
This implies that 
\begin{eqnarray*}
\frac{1}{\alpha a\left(J_{n}\right)}S_{J_{n}}^{-}\left(1_{A}\right)\left(x\right) & \leq & \frac{1}{\alpha a\left(J_{n}\right)}S_{n}^{-}\left(1_{A}\right)(x)\underset{_{n\in K}}{\lesssim}\frac{4\delta a\left(n\right)m\left(A\right)}{\alpha a\left(J_{n}\right)}\\
 & \underset{_{n\in K}}{\lesssim} & \frac{4\delta}{\alpha\rho}m(A)
\end{eqnarray*}
and 
\begin{align*}
\frac{1}{\alpha a\left(J_{n}\right)}S_{J_{n}}\left(1_{A}\right)(x) & =\frac{1}{\alpha a\left(J_{n}\right)}\Sigma_{J_{n}}\left(1_{A}\right)(x)-\frac{1}{\alpha a\left(J_{n}\right)}S_{J_{n}}^{-}\left(1_{A}\right)\left(x\right)\\
 & \overset{_{\left(\mbox{A2}\right)}}{\gtrsim}\frac{\left(2-2\delta\right)}{\alpha}m(A)-\frac{1}{\alpha a\left(J_{n}\right)}S_{J_{n}}^{-}\left(1_{A}\right)\left(x\right)\\
 & \underset{_{n\in K}}{\gtrsim}\frac{1}{\alpha}\left[\left(2-2\delta\right)-\frac{4\delta}{\rho}\right]m\left(A\right)\geq\left(1-4\delta/\rho\right)m(A).
\end{align*}
Here the last inequality follows from 
\[
\frac{2-2\delta}{\alpha}\geq\frac{2-2\delta}{2+2\delta}\geq 1-2\delta>1-\frac{\delta}{\rho},
\]
and $\alpha>2-2\delta>3/2$. \end{proof}
\begin{lem}
\label{lem: Lower bound and upper bound}Let $\left(x,K\right)\in B\times2^{\NN}$
be an admissible pair then 
\[
\frac{2}{25}\leq\linfK\frac{a\left(\frac{n}{9}\right)}{a(n)}\ \&\ \lsupK\frac{a\left(\frac{n}{9}\right)}{a(n)}\leq\frac{1}{3}.
\]
\end{lem}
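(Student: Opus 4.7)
The plan is to prove each inequality separately by contradiction, combining the exact shift identity $S_{n}^{-}(1_{B})(T^{n}x)=S_{n}(1_{B})(x)-1_{B}(x)$ and the symmetric-sum identity $\Sigma_{m}(1_{B})(T^{m}x)=S_{2m}(1_{B})(x)-1_{B}(x)$ with the propagation result of Lemma~\ref{lem: Bound from below implies bound from above}. As a common setup, record the consequences of admissibility: from (A5) and $\alpha\in(2-2\delta,2+2\delta)$, one has $S_{n}(1_{B})(x)\sim\alpha\,a(n)m(B)$ along $n\in K$, and combined with (A2) this yields $S_{n}^{-}(1_{B})(x)\underset{n\in K}{\lesssim}4\delta\,a(n)m(B)$. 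From (A3) and its $T^{-1}$-analogue (valid a.s.\ by the ratio ergodic theorem applied to $T^{-1}$), one has the universal upper envelope $S_{N}(1_{B})(x),\,S_{N}^{-}(1_{B})(x)\leq(\alpha+o(1))a(N)m(B)$ for $N\geq N_{0}$.

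For the upper bound $\lsupK a(n/9)/a(n)\leq 1/3$, assume toward contradiction that $a(n/9)/a(n)\to c>1/3$ along an infinite $K'\subset K$. Set $y_{n}:=T^{n}x\in B$ by (A4); since the asymptotic versions of (A2), (A3) are a.s.\ tail conditions, we may assume they are inherited at $y_{n}$. The shift identity gives $S_{n}^{-}(1_{B})(y_{n})=S_{n}(1_{B})(x)-1_{B}(x)\sim\alpha a(n)m(B)$, whence (A2) at $y_{n}$ forces $S_{n}(1_{B})(y_{n})\lesssim(2+2\delta-\alpha)a(n)m(B)\leq 4\delta\,a(n)m(B)$; in particular $S_{n/9}(1_{B})(y_{n})\leq S_{n}(1_{B})(y_{n})\lesssim(12\delta/c)a(n/9)m(B)$. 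The lower bound $\Sigma_{n/9}(1_{B})(y_{n})\geq(2-2\delta)a(n/9)m(B)$ from (A2) then forces $S_{n/9}^{-}(1_{B})(y_{n})\gtrsim(2-O(\delta))a(n/9)m(B)$, i.e.\ the final $(1/9)$-window $[8n/9,n)$ of the forward orbit from $x$ concentrates a fixed positive fraction of the total mass $S_{n}(1_{B})(x)$. Subtracting produces an upper bound on $S_{8n/9}(1_{B})(x)$; comparing this with the lower bound from Lemma~\ref{lem: Bound from below implies bound from above} at $J_{n}=8n/9$ constrains $a(8n/9)/a(n)\leq 2/3-O(\eta)+O(\delta)$. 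Iterating the same reduction at the scale $n/9$ (where Lemma~\ref{lem: Bound from below implies bound from above} supplies the needed admissibility-type lower bound) produces a geometric chain of incompatible constraints, invoking \eqref{eq: (1000)/99^2 ... lestt than 1/sqrt(3)} at the final step.

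For the lower bound $\linfK a(n/9)/a(n)\geq 2/25$, suppose $a(n/9)/a(n)\to c<2/25$ along $K'\subset K$. Then (A3) gives $S_{n/9}(1_{B})(x)\leq(\alpha+o(1))c\cdot a(n)m(B)<(4/25+O(\delta))a(n)m(B)$, so the mass in $[n/9,n)$ exceeds $(2-4/25-O(\delta))a(n)m(B)$, a factor of at least $25/2-O(\delta)$ times that on $[0,n/9)$. On the other hand, applying the symmetric-sum identity at a $B$-return $k_{n}\approx n/2$ (guaranteed by ergodicity and density of $B$) together with (A2) at $T^{k_{n}}x$ yields $a(2k_{n})/a(k_{n})\sim 2/\alpha\approx 1$, i.e.\ $a$ barely grows over doubling from $n/2$ to $n$; iterating the windowing down to the scale $n/9$ forces $a(n/9)/a(n)\to 1$, violating $c<2/25$. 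The threshold $2/25$ is exactly the value at which Lemma~\ref{lem: Bound from below implies bound from above} has loss factor $1-4\delta/\rho=1-50\delta$, matched precisely by \eqref{eq: 1-48epsilon is less than 0.99}. \emph{Main obstacle:} the careful bookkeeping required to thread the asymptotic $\delta$-errors from (A2), (A3), (A5) against the exact combinatorial identities produced by shifts; the numerical constants $1/3$ and $2/25$ are calibrated so that, after invoking Lemma~\ref{lem: Bound from below implies bound from above} and the arithmetic identities \eqref{eq: 1-48epsilon is less than 0.99} and \eqref{eq: (1000)/99^2 ... lestt than 1/sqrt(3)}, the resulting system of inequalities becomes infeasible.
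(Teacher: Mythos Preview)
Your proposal has genuine gaps in both directions, and neither matches the paper's argument.

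\textbf{Lower bound.} The paper's proof is direct, not by contradiction: partition $[0,n)$ into nine blocks of length $n/9$, pass in each block to the first $B$-return $J_l\ge ln/9$, and use the uniform Egorov bound \eqref{eq: boundedness of Sigma_n and supS_n} on $B$ to obtain $S_{n/9}(1_B)(T^{J_l}x)\leq S_{n/9}(1_A)(T^{J_l}x)\leq(2+2\delta)a(n/9)m(A)$. Summing the nine blocks yields $\alpha a(n)m(B)\lesssim 9(2+2\delta)a(n/9)m(A)$, and the constant $2/25$ falls out from $\alpha>2-2\delta$ together with $m(B)>\tfrac34 m(A)$. Your sketch instead posits a $B$-return $k_n\approx n/2$ ``guaranteed by ergodicity and density of $B$,'' but nothing in the admissibility setup furnishes such a return; the only $B$-return you control along $K$ is at time $n$ (via (A4)). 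Even granting such $k_n$, your conclusion $a(n)/a(n/2)\approx 1$ and the claimed iteration down to $n/9$ require $B$-returns at every intermediate scale, which you have not produced, and the specific threshold $2/25$ plays no role in your argument.

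\textbf{Upper bound.} You start by bounding $S_{n/9}^{-}(1_B)(T^n x)$ and hence $S_{8n/9}(1_B)(x)$ and $a(8n/9)/a(n)$; but this is the content of Lemma~\ref{lem: bound on a(8n/9)}, which in the paper \emph{depends} on the present lemma, and your ``iterating the same reduction at scale $n/9$'' is left entirely unexplained. The paper's route is different and constructive: first show $a(n/3)/a(n)\leq 1/\sqrt{3}$ via a \emph{packing} argument. With $j_n=\max\{j\le n/3:T^j x\in B\}$ one has $T^{j_n}x,T^n x\in B$, and since $3j_n\le n$ the two windows $[1,2j_n-1]$ and $[n-j_n+1,n+j_n-1]$ are disjoint and contained in $[j_n-n,j_n+n]$, so
\[
\Sigma_n(1_A)(T^{j_n}x)\ \ge\ \Sigma_{j_n}(1_A)(T^{j_n}x)+\Sigma_{j_n}(1_A)(T^n x)\ \gtrsim\ 2(2-2\delta)a(j_n)m(A).
\]
Combining with $a(j_n)\gtrsim 0.99\,a(n/3)$ (from Lemma~\ref{lem: Bound from below implies bound from above} with $\rho=2/25$, supplied by part~(a)) and the upper Egorov bound gives $a(n/3)/a(n)\le 1/\sqrt{3}$ via \eqref{eq: (1000)/99^2 ... lestt than 1/sqrt(3)}. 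The same packing is then repeated one level down (through auxiliary stopping times $L_n\ge n/3$ and $l_n\le L_n/3$, both in $B$) to get $a(n/9)/a(n/3)\le 1/\sqrt{3}$, and multiplication yields $1/3$. This two-windows-in-one device is the heart of the argument and does not appear in your sketch.
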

\begin{proof}
We show first that 
\begin{align*}
\tag{a} & \linfK\frac{a\left(\frac{n}{9}\right)}{a(n)}\geq\frac{2}{25}.
\end{align*}
Define for $n\in K$, 
\[
J_{l}:=\min\left\{ l\geq\frac{ln}{9}:\ T^{l}x\in B\right\} \wedge\frac{\left(l+1\right)n}{9};\ \left(0\leq l\leq8\right),
\]
then 
\begin{align*}
\alpha m\left(B\right)a\left(n\right) & \underset{_{n\in K}}{\overset{_{\left(\mbox{A5}\right)}}{\lesssim}}S_{n}\left(1_{B}\right)\left(x\right)=\sum_{l=0}^{8}S_{\frac{n}{9}}\left(1_{B}\right)\left(T^{\frac{ln}{9}}x\right)\\
 & =\sum_{l=0}^{8}S_{\frac{\left(l+1\right)n}{9}-J_{l}}\left(1_{B}\right)\left(T^{J_{l}}x\right)\leq\sum_{l=0}^{8}S_{\frac{n}{9}}\left(1_{B}\right)\left(T^{J_{l}}x\right)\\
 & \leq\sum_{l=0}^{8}S_{\frac{n}{9}}\left(1_{B}\right)\left(T^{J_{l}}x\right)\leq\sum_{l=0}^{8}\left\Vert S_{\frac{n}{9}}\left(1_{A}\right)\right\Vert _{L_{\infty}\left(B\right)}\\
 & \underset{}{\lesssim}9(2+2\delta)a\left(\frac{n}{9}\right)m\left(A\right).
\end{align*}
 Thus 
\[
\linfK\frac{a\left(\frac{n}{9}\right)}{a(n)}\geq\frac{\alpha m(B)}{18(1+\delta)m\left(A\right)}>\frac{2}{25}.\ \Box\left(\mbox{a}\right)
\]
 Next we show 
\begin{align*}
\tag{b} & \lsupK\frac{a\left(\frac{n}{3}\right)}{a(n)}\leq\frac{1}{\sqrt{3}}.
\end{align*}
By $\left(\mbox{a}\right)$ and monotonicity of $a(n)$, $\left\{ J_{n}=n/3:n\in K\right\} $
satisfies the conditions of Lemma \ref{lem: Bound from below implies bound from above}
with $\rho=2/25$, hence 
\[
S_{\frac{n}{3}}\left(1_{A}\right)(x)\underset{_{n\in K}}{\gtrsim}\alpha a\left(\frac{n}{3}\right)\left(1-50\delta\right)m(A).
\]
 By $\left(\mbox{A1}\right)$, 
\begin{equation}
S_{\frac{n}{3}}\left(1_{B}\right)(x)\underset{_{n\in K}}{\gtrsim}\alpha a\left(\frac{n}{3}\right)\left(1-50\delta\right)m(B)\overset{_{\eqref{eq: 1-48epsilon is less than 0.99}}}{\geq}\frac{99\alpha}{100}a\left(\frac{n}{3}\right)m\left(B\right).\label{eq: n/3 is a large sequence}
\end{equation}
 For $n\in K$, let 
\[
j_{n}:=\max\left\{ j\leq n/3:\ T^{j}x\in B\right\} .
\]
We claim that $a\left(j_{n}\right)\underset{n\in K}{\gtrsim}0.99a\left(n/3\right)$,
since
\begin{eqnarray*}
\alpha a\left(j_{n}\right)m(B) & \gtrsim & S_{j_{n}}\left(1_{B}\right)(x)=S_{\frac{n}{3}}\left(1_{B}\right)(x)\\
 & \underset{_{n\in K}}{\gtrsim} & \frac{99\alpha}{100}a\left(\frac{n}{3}\right)m(B).
\end{eqnarray*}
Finally since $T^{j_{n}}x\in B$, 
\begin{eqnarray*}
\left(2+2\delta\right)a\left(n\right)m(A) & \gtrsim & \Sigma_{n}\left(1_{A}\right)\left(T^{j_{n}}x\right)=\sum_{k=-n+j_{n}}^{n+j_{n}}1_{A}\left(T^{k}x\right)\\
 & \geq & \Sigma_{j_{n}}\left(1_{A}\right)\left(T^{j_{n}}x\right)+\Sigma_{j_{n}}\left(1_{A}\right)\left(T^{n}x\right)\\
 & \overset{\left(\star\right)}{\gtrsim} & 2\left(2-2\delta\right)a\left(j_{n}\right)m(A)\underset{_{n\in K}}{\gtrsim}\left(4-4\delta\right)\left(0.99a\left(\frac{n}{3}\right)\right)m(A).
\end{eqnarray*}
In $\left(\star\right)$ we used the fact that $T^{j_{n}}x,T^{n}x\in B$.
Therefore 
\[
\lsupK\frac{a\left(n/3\right)}{a(n)}\leq\frac{100}{198}\cdot\frac{\left(1+\delta\right)}{\left(1-\delta\right)}\overset{_{\eqref{eq: (1000)/99^2 ... lestt than 1/sqrt(3)}}}{\leq}\frac{1}{\sqrt{3}}.\ \Box\left(\mbox{b}\right)
\]
 Next, we show that 
\begin{align*}
\tag{c} & \lsupK\frac{a\left(\frac{n}{9}\right)}{a\left(n\right)}\leq\frac{1}{3}.
\end{align*}
For $n\in K$, let 
\[
L_{n}:=\min\left\{ J\geq\frac{n}{3}:\ T^{J}x\in B\right\} .
\]
Since $n\in K$, $T^{n}x\in B$, whence $L_{n}\leq n$. 

It follows from 
\[
\frac{a\left(L_{n}\right)}{a(n)}\geq\frac{a(n/9)}{a(n)}\underset{_{n\in K}}{\gtrsim}\frac{2}{25},
\]
and Lemma \ref{lem: Bound from below implies bound from above} that
(here we move from $1_{A}$ to $1_{B}$ using condition (A1)), 
\begin{eqnarray*}
\alpha\left(1-50\delta\right)m\left(B\right)a\left(L_{n}\right) & \underset{_{n\in K}}{\lesssim} & S_{L_{n}}\left(1_{B}\right)\left(x\right)\leq S_{n/3}\left(1_{B}\right)\left(x\right)+1\\
 & \lesssim & \alpha a\left(n/3\right)m\left(B\right),
\end{eqnarray*}
hence 
\begin{equation}
a\left(n/3\right)\underset{_{n\in K}}{\gtrsim}0.99a\left(L_{n}\right).\label{eq: L_n versus n/3}
\end{equation}

Define for $n\in K$,
\[
l_{n}:=\max\left\{ j\leq\frac{L_{n}}{3}:\ T^{j}x\in B\right\} .
\]
By repeating the previous argument with $L_{n}$ replaced by $L_{n}/3$
(which is still greater or equal to $n/9)$, by Lemma \ref{lem: Bound from below implies bound from above},
\[
S_{L_{n}/3}\left(1_{A}\right)(x)\underset{_{n\in K}}{\gtrsim}\frac{99\alpha}{100}a\left(L_{n}/3\right)\cdot m(A)
\]
and

\begin{eqnarray*}
\alpha a\left(l_{n}\right)m\left(B\right) & \gtrsim & S_{l_{n}}\left(1_{B}\right)(x)=S_{L_{n}/3}\left(1_{B}\right)(x)\\
 & \underset{_{n\in K}}{\gtrsim} & \alpha\cdot\left(0.99\right)a\left(L_{n}/3\right)m\left(B\right).
\end{eqnarray*}
Therefore 
\begin{equation}
a\left(l_{n}\right)\underset{_{n\in K}}{\gtrsim}0.99a\left(L_{n}/3\right).\label{eq:j_n vs. L_n/3}
\end{equation}
The argument in the proof of $\left(\mbox{b}\right)$ shows that 
\begin{eqnarray*}
2\left(2-2\delta\right)a\left(l_{n}\right)m\left(A\right) & \lesssim & \Sigma_{l_{n}}\left(1_{A}\right)\left(T^{l_{n}}x\right)+\Sigma_{l_{n}}\left(1_{A}\right)\left(T^{L_{n}}x\right)\\
 & \leq & \Sigma_{L_{n}}\left(1_{A}\right)\left(T^{l_{n}}x\right)\lesssim\left(2+2\delta\right)a\left(L_{n}\right)m(A).
\end{eqnarray*}
Here we used in the first inequality the fact that $T^{l_{n}}x,T^{L_{n}}x\in B$
and in the last inequality the fact $T^{l_{n}}x\in B$. 

Therefore 
\begin{equation}
a\left(l_{n}\right)\underset{_{n\in K}}{\lesssim}\left(\frac{1}{2}\right)\left(\frac{1+\delta}{1-\delta}\right)a\left(L_{n}\right),\label{eq: j_n vs. L_n}
\end{equation}
and 
\begin{eqnarray*}
\frac{a\left(n/9\right)}{a\left(n/3\right)} & \leq & \frac{a\left(L_{n}/3\right)}{a\left(n/3\right)}\overset{\eqref{eq: L_n versus n/3}}{\leq}\frac{100}{99}\cdot\frac{a\left(L_{n}/3\right)}{a\left(L_{n}\right)}\\
 & \overset{\eqref{eq:j_n vs. L_n/3}}{\underset{_{n\in K}}{\lesssim}} & \left(\frac{100}{99}\right)^{2}\frac{a\left(l_{n}\right)}{a\left(L_{n}\right)}\overset{\eqref{eq: j_n vs. L_n}}{\underset{_{n\in K}}{\lesssim}}\left(\frac{100}{99}\right)^{2}\cdot\frac{1}{2}\left(\frac{1+\delta}{1-\delta}\right)\\
 & \overset{_{\eqref{eq: (1000)/99^2 ... lestt than 1/sqrt(3)}}}{\leq} & \frac{1}{\sqrt{3}}.
\end{eqnarray*}
Finally 
\[
\frac{a\left(n/9\right)}{a\left(n\right)}=\frac{a\left(n/9\right)}{a\left(n/3\right)}\cdot\frac{a\left(n/3\right)}{a\left(n\right)}\underset{_{n\in K}}{\lesssim}\frac{1}{3}.\ \Box\left(\mbox{c}\right)
\]
\end{proof}
\begin{lem}
\label{lem: bound on a(8n/9)} If $\left(x,K\right)$ is an admissible
pair then 
\[
\lsupK\frac{a\left(\frac{8n}{9}\right)}{a\left(n\right)}\leq0.94.
\]
\end{lem}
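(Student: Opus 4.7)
\textsl{Proof plan.} The plan is to extract, along $K$, an extra lower bound on visits of $x$ to $A$ in the terminal window $[8n/9+1,\,n-1]$, combine it with a lower bound on $S_{8n/9}(1_A)(x)$ coming from Lemma~\ref{lem: Bound from below implies bound from above}, and use $S_n(1_A)(x)\sim\alpha a(n)m(A)$ along $K$ to trap $a(8n/9)$.

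First, Lemma~\ref{lem: Bound from below implies bound from above} applies with $J_n=8n/9$ and $\rho=2/25$, since by Lemma~\ref{lem: Lower bound and upper bound}(a) and monotonicity of $a$, $\linfK a(8n/9)/a(n)\geq 2/25$. It yields
\[
S_{8n/9}(1_A)(x)\underset{_{n\in K}}{\gtrsim}(1-50\delta)\alpha\, a(8n/9)\,m(A).
\]
Next, since $T^n x\in B$, \eqref{eq:behavior of symmetric sums} gives $\Sigma_{n/9}(1_A)(T^n x)\gtrsim 2(1-\delta)a(n/9)m(A)$. Writing $\Sigma_{n/9}=S_{n/9}+S_{n/9}^{-}$ at $T^n x$ and using the identity $S_n^{-}(1_A)(T^n x)=S_n(1_A)(x)-1_A(x)\sim\alpha a(n)m(A)$ along $K$ (from (A1), (A5)) together with $\Sigma_n(1_A)(T^n x)\leq 2(1+\delta)a(n)m(A)$,
\[
S_{n/9}(1_A)(T^n x)\leq S_n(1_A)(T^n x)\underset{_{n\in K}}{\lesssim}(2(1+\delta)-\alpha)a(n)m(A)\leq 4\delta\, a(n)m(A),
\]
and invoking Lemma~\ref{lem: Lower bound and upper bound}(a) once more,
\[
S_{n/9}^{-}(1_A)(T^n x)\underset{_{n\in K}}{\gtrsim}\left(\tfrac{4}{25}(1-\delta)-4\delta\right)a(n)m(A).
\]

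The crucial observation is that $S_{n/9}^{-}(1_A)(T^n x)=\sum_{l=8n/9+1}^{n-1}1_A(T^l x)\leq S_n(1_A)(x)-S_{8n/9}(1_A)(x)$. Feeding both lower bounds into $S_n(1_A)(x)\sim\alpha a(n)m(A)$ yields
\[
\alpha\, a(n)\underset{_{n\in K}}{\gtrsim}(1-50\delta)\alpha\, a(8n/9)+\left(\tfrac{4}{25}-\tfrac{104\delta}{25}\right)a(n),
\]
and solving for the ratio while using $\alpha\leq 2+2\delta$ and $\delta=1/5000$ gives $a(8n/9)/a(n)\lesssim 0.93<0.94$. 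The only obstacle is routine $\delta$-bookkeeping; conceptually the point is that the backward part of $\Sigma_{n/9}(1_A)(T^n x)$ must carry essentially all its mass, since the forward part at $T^n x$ is suppressed by the $K$-asymptotics at $x$.
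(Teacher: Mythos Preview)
Your argument is correct and follows essentially the same route as the paper: you suppress $S_{n/9}(1_A)(T^nx)$ via the $K$-asymptotics at $x$, deduce that $S_{n/9}^{-}(1_A)(T^nx)$ carries nearly all of $\Sigma_{n/9}(1_A)(T^nx)$, and combine this with the Lemma~\ref{lem: Bound from below implies bound from above} lower bound on $S_{8n/9}(1_A)(x)$ inside $S_n(1_A)(x)\sim\alpha a(n)m(A)$. The only cosmetic difference is bookkeeping: the paper first bounds $S_{n/9}^{-}(1_A)(T^nx)\gtrsim(2-52\delta)a(n/9)m(A)$ in the $a(n/9)$ scale and only then invokes $a(n/9)\gtrsim\tfrac{2}{25}a(n)$, whereas you convert to the $a(n)$ scale immediately; both yield a bound near $0.93$. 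One minor citation point: the lower bound $\Sigma_{n/9}(1_A)(T^nx)\gtrsim 2(1-\delta)a(n/9)m(A)$ should be justified by the Egorov-set property \eqref{eq: boundedness of Sigma_n and supS_n} of $B$ (equivalently (A2) transferred to $1_A$), not by \eqref{eq:behavior of symmetric sums}, since you need the bound uniformly for the specific point $T^nx\in B$.
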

\begin{proof}
First we show that 
\begin{equation}
S_{\frac{n}{9}}^{-}\left(1_{A}\right)\left(T^{n}x\right)\underset{_{K\ni n\to\infty}}{\gtrsim}\left(2-52\delta\right)a\left(\frac{n}{9}\right)m\left(A\right)\geq\frac{96\alpha}{100}a\left(\frac{n}{9}\right)m\left(A\right),\label{eq:S_n/9 minus is big}
\end{equation}
here the last inequality follows from $\alpha\leq2+2\delta=\frac{10002}{5000}$
and $\left(2-52\delta\right)=\frac{9648}{5000}\geq\frac{96\alpha}{100}.$

Indeed, since 
\[
S_{n}^{-}\left(1_{A}\right)\left(T^{n}x\right)=S_{n}\left(1_{A}\right)(x)\underset{_{K\ni n\to\infty}}{\sim}\alpha a(n)m\left(A\right),
\]
then 
\begin{eqnarray*}
S_{n}\left(1_{A}\right)\left(T^{n}x\right) & = & \Sigma_{n}\left(1_{A}\right)\left(T^{n}x\right)-S_{n}^{-}\left(1_{A}\right)\left(T^{n}x\right)\\
 & \underset{_{K\ni n\to\infty}}{\sim} & \Sigma_{n}\left(1_{A}\right)\left(T^{n}x\right)-\alpha a(n)m\left(A\right).
\end{eqnarray*}
In addition for every $n\in K$, $T^{n}x\in B$, it follows from $\left(\mbox{A2}\right)$
that as $K\ni n\to\infty$, 
\[
\Sigma_{n}\left(1_{A}\right)\left(T^{n}x\right)\underset{}{\lesssim}\left(2+2\delta\right)a\left(n\right)m\left(A\right).
\]
Therefore since $\alpha>2-2\delta$, 
\begin{equation}
S_{n}\left(1_{A}\right)\left(T^{n}x\right)\underset{_{K\ni n\to\infty}}{\lesssim}\left(\left(2+2\delta\right)-\alpha\right)a(n)m(A)\leq4\delta a(n)m\left(A\right).\label{eq:S_n T^n is small when n is in K}
\end{equation}
Finally 
\begin{eqnarray*}
S_{\frac{n}{9}}^{-}\left(1_{A}\right)\left(T^{n}x\right) & \geq & \Sigma_{\frac{n}{9}}\left(1_{A}\right)\left(T^{n}x\right)-S_{n}^{-}\left(1_{A}\right)\left(T^{n}x\right)\\
 & \overset{_{"\left({\rm A2}\right)\ \text{and\ \eqref{eq:S_n T^n is small when n is in K}"}}}{\underset{_{n\in K}}{\gtrsim}} & \left[\left(2-2\delta\right)a\left(\frac{n}{9}\right)-4\delta a\left(n\right)\right]m(A)\\
 & \overset{_{_{\mbox{{\rm Lemma\ \ref{lem: Lower bound and upper bound}}}}}}{\underset{_{n\in K}}{\gtrsim}} & \left[\left(2-2\delta\right)a\left(\frac{n}{9}\right)-50\delta a\left(\frac{n}{9}\right)\right]m(A)\\
 &=& \left(2-52\delta\right)m\left(A\right)a\left(\frac{n}{9}\right).\ \Box\eqref{eq:S_n/9 minus is big}
\end{eqnarray*}
Now since 
\[
\frac{a\left(\frac{8n}{9}\right)}{a(n)}\geq\frac{a\left(\frac{n}{9}\right)}{a\left(n\right)}\underset{_{n\in K}}{\gtrsim}\frac{2}{25},
\]
then by Lemma \ref{lem: Bound from below implies bound from above},
\begin{eqnarray*}
\alpha\left(1-50\delta\right)a\left(\frac{8n}{9}\right)m(A) & \underset{_{n\in K}}{\lesssim} & S_{\frac{8n}{9}}\left(1_{A}\right)\left(x\right)=S_{n}\left(1_{A}\right)\left(x\right)-S_{\frac{n}{9}}^{-}\left(1_{A}\right)\left(T^{n}x\right)\\
 & \underset{_{n\in K}}{\overset{_{\eqref{eq:S_n/9 minus is big}\ {\rm and}\ \left({\rm A5}\right)}}{\lesssim}} & \alpha m\left(A\right)\left[a\left(n\right)-\frac{96}{100}a\left(\frac{n}{9}\right)\right]\\
 & \overset{_{{\rm Lemma}\ \ref{lem: Lower bound and upper bound}}}{\underset{_{n\in K}}{\lesssim}} & \alpha m\left(A\right)a\left(n\right)\left[1-\frac{96}{100}\cdot\frac{2}{25}\right]=\alpha m\left(A\right)a\left(n\right)\left[\frac{93}{100}\right].
\end{eqnarray*}
Whence 
\[
\frac{a\left(\frac{8n}{9}\right)}{a(n)}\underset{_{n\in K}}{\lesssim}\frac{93}{100\left(1-50\delta\right)}\leq0.94.
\]

\end{proof}
\begin{proof}[Proof of Theorem \ref{thm: effective divergence for symmetric sums}]

Fix an admissible pair $\left(x,K\right)\in B\times2^{\NN}$, then
by Lemmas \ref{lem: Lower bound and upper bound} and \ref{lem: bound on a(8n/9)},
\[
\lsupK\frac{a\left(\frac{n}{9}\right)}{a\left(n\right)}\leq\frac{1}{3}\ \ \text{and}\ \ \lsupK\frac{a\left(\frac{8n}{9}\right)}{a\left(n\right)}\leq0.94.
\]
For $n\in K$, let 
\[
\mathfrak{J}{}_{n}=\mathfrak{J}_{n}\left(x\right):=\min\left\{ j\geq\frac{n}{9}:\ T^{j}x\in B\right\} .
\]
We claim that $\mathfrak{J}{}_{n}\leq\frac{8n}{9};$ else $\frac{8n}{9}<\mathfrak{J}{}_{n}\leq n$
(since for $n\in K$, $T^{n}x\in B$) and therefore as $n\to\infty,\ n\in K$:
\begin{eqnarray*}
\alpha a\left(n\right)m\left(B\right) & \sim & S_{n}\left(1_{B}\right)\left(x\right)\\
 & = & S_{\mathfrak{J}{}_{n}}\left(1_{B}\right)\left(x\right)+S_{n-\mathfrak{J}{}_{n}\vee0}\left(1_{B}\right)\left(T^{\mathfrak{J}{}_{n}}x\right)\\
 & \leq & S_{\frac{n}{9}}\left(1_{B}\right)\left(x\right)+1+S_{n-\mathfrak{J}{}_{n}\vee0}\left(1_{B}\right)\left(T^{\mathfrak{J}{}_{n}}x\right)\\
 & \overset{_{\left(\diamondsuit\right)}}{\leq} & S_{\frac{n}{9}}\left(1_{B}\right)\left(x\right)+1+S_{\frac{n}{9}}\left(1_{B}\right)\left(T^{\mathfrak{J}{}_{n}}x\right)\\
 & \overset{_{\left(\mbox{A3}\right)}}{\lesssim} & 2\alpha a\left(\frac{n}{9}\right)m(B).
\end{eqnarray*}
The inequality of $\left(\diamondsuit\right)$ is where we assume
in the contranegative that $\mathfrak{J}{}_{n}\geq\frac{8n}{9}$. 

Thus
\[
\frac{1}{2}\underset{_{n\in K}}{\lesssim}\frac{a\left(\frac{n}{9}\right)}{a\left(n\right)}\underset{_{n\in K}}{\lesssim}\frac{1}{3}.
\]
This contradiction shows that $\mathfrak{J}{}_{n}\leq\frac{8n}{9}$. 

Finally since for large $n\in K$, $\frac{n}{9}\leq\mathfrak{J}{}_{n}\leq\frac{8n}{9}$:
\[
[0,n]\subset\left[\mathfrak{J}{}_{n}-\frac{8n}{9},\mathfrak{J}{}_{n}+\frac{8n}{9}\right].
\]
Therefore as $n\to\infty,\ n\in K$, 
\begin{eqnarray*}
\left(2+2\delta\right)a\left(\frac{8n}{9}\right)m\left(A\right) & \overset{_{T^{\mathfrak{J}{}_{n}}\left(x\right)\in B}}{\gtrsim} & \Sigma_{\frac{8n}{9}}\left(1_{A}\right)\left(T^{\mathfrak{J}{}_{n}}x\right)\geq S_{n}\left(1_{A}\right)\left(x\right)\\
 & \underset{_{n\in K}}{\sim} & \alpha a\left(n\right)m\left(A\right)\geq\left(2-2\delta\right)a\left(n\right)m\left(A\right),
\end{eqnarray*}
whence by Lemma (\ref{lem: bound on a(8n/9)}), 
\[
\frac{1-\delta}{1+\delta}\underset{_{n\in K}}{\lesssim}\frac{a\left(\frac{8n}{9}\right)}{a\left(n\right)}\underset{_{n\in K}}{\lesssim} 0.94.
\]
This is a contradiction since $\frac{1-\delta}{1+\delta}=\frac{4999}{5001}>0.94$.
This proves the theorem. \end{proof}

\section{The main step to move from a return sequence to a universal bound}
\begin{lem}
\label{lem: moving from the return sequence to general sequences}Let
$\left(X,\BB,m,T\right)$ be an infinite, invertible, conservative,
bounded rationally ergodic, measure preserving transformation then
for any sequence $a_{n}\to\infty$ and for all $f\in L_{1}(X,m)_{+}$
if $0<\linf\frac{\Sigma_{n}(f)}{a_{n}}<\infty$, then 
\[
\frac{\linf\frac{\Sigma_{n}(f)}{a_{n}}}{\lsup\frac{\Sigma_{n}(f)}{a_{n}}}\leq\sqrt{\frac{\lbeta\left(T\right)}{\ubeta\left(T\right)}}.
\]
\end{lem}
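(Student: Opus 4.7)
The plan is a two-step squeeze. First, I would use the symmetric weak-rational-ergodicity identity~\eqref{eq: weak rational ergodicity for symmetric sums} together with Egorov's theorem to pin the deterministic ratio $r_{n}:=2a_{n}(T)/a_{n}$ into the interval $[L_{A}/m(A),U_{A}/m(A)]$ asymptotically. Then, pointwise on a set of positive measure, I would invert this squeeze to trap $c_{n}(x):=\Sigma_{n}(1_{A})(x)/(2a_{n}(T))$ between $m(A)L_{A}/U_{A}$ and $m(A)U_{A}/L_{A}$. Multiplying the two resulting inequalities on $\lbeta$ and $\ubeta$ produces the square root.

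By the ratio ergodic theorem (observation~1 in the preliminaries), the ratio $\linf(\Sigma_{n}(f)/a_{n})/\lsup(\Sigma_{n}(f)/a_{n})$ is independent of the choice of $f\in L_{1}(X,m)_{+}$, so I would fix $A\in\overline{\mathcal{R}}(T)$ with $0<m(A)<\infty$ and work with $f=1_{A}$. Set $L_{A}:=\linf\Sigma_{n}(1_{A})/a_{n}$ and $U_{A}:=\lsup\Sigma_{n}(1_{A})/a_{n}$; these are a.e.\ constant by $T$-invariance and ergodicity, and Proposition~\ref{prop: AKW}(i) combined with the hypothesis $0<\linf<\infty$ guarantees $L_{A},U_{A}\in(0,\infty)$ and $r_{n}\asymp1$.

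Egorov's theorem on the finite-measure set $A$ then supplies, for each $\epsilon>0$, a subset $B_{\epsilon}\subset A$ of positive measure and $N_{\epsilon}\in\NN$ such that $L_{A}-\epsilon\le\Sigma_{n}(1_{A})(x)/a_{n}\le U_{A}+\epsilon$ for all $x\in B_{\epsilon}$, $n\ge N_{\epsilon}$. By heredity, $B_{\epsilon}\in\overline{\mathcal{R}}(T)$, so~\eqref{eq: weak rational ergodicity for symmetric sums} gives $\int_{B_{\epsilon}}c_{n}\,dm\to m(B_{\epsilon})m(A)$. Integrating the Egorov inequality over $B_{\epsilon}$ and using $\Sigma_{n}(1_{A})/a_{n}=r_{n}c_{n}$ gives
\[
r_{n}\cdot(m(B_{\epsilon})m(A)+o(1))\in[(L_{A}-\epsilon)m(B_{\epsilon}),(U_{A}+\epsilon)m(B_{\epsilon})],
\]
so after letting $\epsilon\to0$, one concludes $L_{A}/m(A)\le\linf r_{n}\le\lsup r_{n}\le U_{A}/m(A)$.

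With this deterministic squeeze in hand, for $x\in B_{\epsilon}$ and $n$ large the identity $c_{n}(x)=(\Sigma_{n}(1_{A})(x)/a_{n})/r_{n}$ together with the Egorov bound forces $c_{n}(x)\in[m(A)L_{A}/U_{A}-o(1),m(A)U_{A}/L_{A}+o(1)]$. Comparing with the a.e.\ identities $\linf c_{n}=\lbeta\,m(A)$ and $\lsup c_{n}=\ubeta\,m(A)$ yields $\lbeta\ge L_{A}/U_{A}$ and $\ubeta\le U_{A}/L_{A}$, and multiplying these two bounds produces $(L_{A}/U_{A})^{2}\le\lbeta/\ubeta$, which is the claim. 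The main technical point is the integration step: it converts the sample-wise Egorov bound on $\Sigma_{n}(1_{A})/a_{n}$ into a deterministic (i.e.\ $x$-free) squeeze on $r_{n}$, and is precisely where the symmetric WRE identity~\eqref{eq: weak rational ergodicity for symmetric sums} is essential.
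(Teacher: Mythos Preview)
Your argument is correct and follows essentially the same route as the paper: fix $A\in\overline{\mathcal{R}}(T)$, apply Egorov to get uniform two-sided bounds on $\Sigma_{n}(1_{A})/a_{n}$ over a subset $B\subset A$, integrate and invoke the symmetric \texttt{WRE} identity~\eqref{eq: weak rational ergodicity for symmetric sums} to squeeze $r_{n}=2a_{n}(T)/a_{n}$ between $L_{A}/m(A)$ and $U_{A}/m(A)$, and then feed this back to obtain $(L_{A}/U_{A})^{2}\le\lbeta/\ubeta$. The only cosmetic differences are that the paper argues by contradiction and normalizes so that $U_{A}=m(A)$, and in the final step bounds the ratio $\liminf/\limsup$ directly via $\Sigma_{n}(f)/a_{n}=r_{n}\cdot\Sigma_{n}(f)/(2a_{n}(T))$ rather than bounding $c_{n}(x)$ pointwise; the underlying algebra is identical.
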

\begin{proof}
Let $a_{n}\to\infty$. Assume in the contranegative that for one (equivalently
for all) $0\leq f\in L^{1}\left(X,m\right)_{+}$, 
\[
\frac{\linf\frac{\Sigma_{n}(f)}{a_{n}}}{\lsup\frac{\Sigma_{n}(f)}{a_{n}}}>\sqrt{\frac{\lbeta\left(T\right)}{\ubeta\left(T\right)}}
\]
$\lsup\frac{\Sigma_{n}(f)}{a_{n}}>0$ and $\linf\frac{\Sigma_{n}(f)}{a_{n}}<\infty$. Notice that this means that for all $f\in L_{1}(X,m)$, $\lsup\frac{\Sigma_{n}(f)}{a_{n}}<\infty$. 

Since $\left(X,\BB,m,T\right)$ is bounded rationally ergodic, there
exists $A\in\bar{\mathcal{R}}\left(T\right)$ with 
\[
0<m(A)<\infty.
\]
By multiplying $a_{n}$ by constants we can assume that, 
\[
\lsup\frac{\Sigma_{n}\left(1_{A}\right)}{a_{n}}=m\left(A\right)\ \text{and}\ \linf\frac{\Sigma_{n}\left(1_{A}\right)}{a_{n}}>\sqrt{\frac{\lbeta\left(T\right)}{\ubeta\left(T\right)}}m(A).
\]
As before, it follows from Egorov's theorem that for all $\gamma<\sqrt{\frac{\lbeta\left(T\right)}{\ubeta\left(T\right)}}<1<\lambda$,
there exists $B\subset A$ of positive measure so that for all $n$
large, 
\[
\gamma m(A)\leq\frac{\Sigma_{n}\left(1_{A}\right)(x)}{a_{n}}\leq\lambda m(A)\ \text{uniformly in }x\in B,
\]
and thus for large $n$ 
\[
\gamma m(A)m\left(B\right)\leq\int_{B}\frac{\Sigma_{n}\left(1_{A}\right)(x)}{a_{n}}dm\leq\lambda m(A)m\left(B\right).
\]
Since $\bar{\mathcal{R}}\left(T\right)$ is hereditary, $B\in\bar{\mathcal{R}}(T)$.
It follows from (\ref{eq: weak rational ergodicity for symmetric sums})
that, 
\begin{eqnarray*}
\int_{B}\frac{\Sigma_{n}\left(1_{A}\right)(x)}{a_{n}}dm & = & \frac{2a_{n}\left(T\right)}{a_{n}}\int_{B}\frac{\Sigma_{n}\left(1_{A}\right)(x)}{2a_{n}(T)}dm\\
 & \sim & \frac{2a_{n}\left(T\right)}{a_{n}}m(A)m(B).
\end{eqnarray*}
This shows that 
\[
\gamma a_{n}\lesssim2a_{n}\left(T\right)\lesssim\lambda a_{n}.
\]
Consequently for all $0\leq f\in L^{1}\left(X,m\right)$, 
\begin{eqnarray*}
\frac{\linf\frac{\Sigma_{n}\left(f\right)}{a_{n}}}{\lsup\frac{\Sigma_{n}\left(f\right)}{a_{n}}} & \leq & \frac{\lambda\linf\frac{\Sigma_{n}(f)}{2a_{n}(T)}}{\gamma\lsup\frac{\Sigma_{n}(f)}{2a_{n}(T)}}\\
 & = & \frac{\lambda}{\gamma}\frac{\lbeta\left(T\right)}{\ubeta\left(T\right)}
\end{eqnarray*}
Since $\gamma$ is arbitrary close to $\sqrt{\frac{\lbeta\left(T\right)}{\ubeta\left(T\right)}}$
and $\lambda$ is arbitrarily close to $1$, 
\[
\sqrt{\frac{\lbeta\left(T\right)}{\ubeta\left(T\right)}}<\frac{\linf\frac{\Sigma_{n}(f)}{a_{n}}}{\lsup\frac{\Sigma_{n}(f)}{a_{n}}}\leq\sqrt{\frac{\lbeta\left(T\right)}{\ubeta\left(T\right)},}\ \forall f\in L_{1}(X,m)_{+}
\]
a contradiction. \end{proof}
\begin{rem}
In \cite{Aaronson-Kosloff-Weiss} we considered two important subclasses
of infinite measure preserving transformations. Namely the ``Rank
one transformations'' and ``transformations admitting a generalized
recurrent event'' (the latter includes the class of null recurrent
Markov shifts). In those examples when (\ref{eq: Symmetric boundedness})
happens then 
\[
\frac{\underbar{\ensuremath{\beta}}\left(T\right)}{\bar{\beta}\left(T\right)}\leq\frac{1}{2}.
\]
This together with the previous Lemma shows that for those examples
for all $a_{n}\to\infty$ and $f\in L_1(X,m)_{+}$,  if  $0<\linf\frac{\Sigma_{n}(f)}{a_{n}}<\infty$, then
\[
\frac{\linf\frac{\Sigma_{n}(f)}{a_{n}}}{\lsup\frac{\Sigma_{n}(f)}{a_{n}}}\leq\frac{1}{\sqrt{2}}.
\]

\end{rem}

\section{Proof of Theorem \ref{thm: Main Theorem}}

Let $\left(X,\BB,m,T\right)$ be a conservative, ergodic, measure
preserving transformation with $m(X)=\infty$ and $a_{n}\to\infty$ such that for all $f\in L_1(X,m)_+$
\[
0<\linf\frac{\Sigma_{n}(f)}{a_{n}}<\infty.
\]
It follows from the comment after Proposition \ref{prop: AKW}  that $T$ is bounded rationally ergodic. By Lemma \ref{lem: moving from the return sequence to general sequences},
\[
\frac{\linf\frac{\Sigma_{n}(f)}{a_{n}}}{\lsup\frac{\Sigma_{n}(f)}{a_{n}}}\leq\sqrt{\frac{\lbeta\left(T\right)}{\ubeta\left(T\right)}}
\]
and by Theorem \ref{thm: effective divergence for symmetric sums}
one has 
\[
\bar{\beta}\left(T\right)-\underbar{\ensuremath{\beta}}\left(T\right)\geq\frac{1}{5000}.
\]
The theorem follows from 
\begin{eqnarray*}
\frac{\lbeta\left(T\right)}{\ubeta\left(T\right)} & \leq & \sup\left\{ \frac{y}{x}:\ y<1<x,\ |x-y|>\frac{1}{5000}\right\} \\
 & = & \sup\left\{ \frac{y}{y+1/5000}:\ y<1\right\} \\
 & = & \frac{5000}{5001}
\end{eqnarray*}
and 
\[
\sqrt{\frac{5000}{5001}}\leq1-\frac{1}{10002}.
\]

\section{Applications for horocyclic flows on geometrically finite hyperbolic
spaces}

In \cite{F. Maucourant and B. Schapira}, Maucourant and Schapira
considered the horocycle flow on geometrically finite hyperbolic spaces
and showed examples where the invariant measure is infinite yet one
still has precise knowledge of the fluctuations of the symmetric Birkhoff
integrals which we now proceed to specify. 

In this setting, let $\Gamma_{0}$ be a non elementary finitely generated
discrete subgroup of $G=SL(2,\RR)$ without Torsion elements other
than $-Id$. Equivalently the surface $S=\left.\Gamma_{0}\right\backslash \mathbb{H}$
where $\mathbb{H}$ is the hyperbolic plane, is a geometrically finite
hyperbolic surface. On the tangent bundle of $S$ one can consider
two measures. The first is the measure of maximal entropy for the
geodesic flow, also called the Bowen-Margulis or Patterson Sullivan
measure which we will denote by $m^{ps}$. This measure is supported
on $\Omega$, the non wandering set of the geodesic flow. The non
wandering set $\mathcal{E}$ of the horocyclic flow is the union of
horocycles intersecting $\Omega$. By \cite{Burger,Roblin}, the horocyclic
flow has a unique ergodic invariant probability measure of full support
on $\mathcal{E}$. This measure, denoted by $m$, is often called
the Burger-Roblin measure. The critical exponent of $\Gamma:=\pi_{1}\left(S\right)$
is defined by 
\[
\delta:=\limsup_{T\to\infty}\log\frac{1}{T}\#\left\{ \gamma\in\Gamma_{0}:\ d\left(o,\gamma o\right)\leq T\right\} ,
\]
for any fixed point $o\in\mathbb{H}$. In words $\delta$ is the exponential
growth rate of the orbits of $\Gamma$ on $\mathbb{H}$. The ergodic
theorem of \cite{F. Maucourant and B. Schapira} is the following
(We took the liberty of rephrasing it in a way that will explain the
connection with symmetric Birkhoff sums). 
\begin{thm*}
\cite{F. Maucourant and B. Schapira}(1)Let $S$ be a non elementary
geometrically finite hyperbolic surface. Let $u\in\mathcal{E}$ be
a non periodic and non wandering vector for the horocyclic flow. If
$f:T^{1}S\to\RR$ is continuous with compact support, then 
\[
\lim_{t\to\infty}\frac{1}{m_{H^{-}(u)}\left(\left(h^{s}u\right)_{|s|\leq t}\right)}\int_{-t}^{t}f\left(h^{s}u\right)ds=\frac{1}{m^{ps}\left(T^{1}S\right)}\int_{T^{1}S}fdm.
\]
Here $m_{H^{-}(u)}$ is the conditional measure of the Patterson-Sullivan
measure on the strong stable horocycle $H^{-}(u)=\left(h^{s}u\right)_{s\in\RR}$. 

(2) Writing $\tau(u):=m_{H^{-}(u)}\left(\left(h^{s}u\right)_{|s|\leq1}\right)$,
then $\tau$ is continuous and 
\\
 $m_{H^{-}(u)}\left(\left(h^{s}u\right)_{|s|\leq t}\right)=t^{\delta}\tau\left(g^{\log t}u\right)$. 

(3) If $S$ is convex cocompact, the non wandering set $\Omega\subset\mathcal{E}$
of the geodesic flow is compact, the map $\tau$ is bounded from above
and below on $\Omega$. Thus there exists constants $c_{S},C_{S}>0$
such that 
\[
\frac{c_{S}t^{\delta}}{m^{ps}\left(T^{1}S\right)}\int_{T^{1}S}fdm\lesssim\int_{-t}^{t}f\left(h^{s}u\right)ds\lesssim\frac{C_{S}t^{\delta}}{m^{ps}\left(T^{1}S\right)}\int_{T^{1}S}fdm,\ \text{as}\ t\to\infty
\]

\end{thm*}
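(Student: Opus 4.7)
The plan is to prove part (2) first, since it provides the normalization needed for part (1), and then to deduce part (3) from parts (1) and (2) together with the compactness of $\Omega$ in the convex cocompact setting. For part (2), I would exploit the $\delta$-conformality of the Patterson--Sullivan density. The geodesic flow conjugates the stable horocyclic flow via $g^{r}h^{s}u=h^{e^{-r}s}g^{r}u$, so $g^{\log t}$ sends the arc $\{|s|\leq t\}\subset H^{-}(u)$ onto the unit arc $\{|s'|\leq 1\}\subset H^{-}(g^{\log t}u)$. The transformation rule for the horocyclic conditional measures, $(g^{r})_{*}m_{H^{-}(u)}=e^{-\delta r}m_{H^{-}(g^{r}u)}$, which is a direct consequence of the conformal density property on $\partial\mathbb{H}$, applied with $r=\log t$ immediately yields $m_{H^{-}(u)}(\{|s|\leq t\})=t^{\delta}\tau(g^{\log t}u)$. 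Continuity of $\tau$ follows from the continuity of $u\mapsto m_{H^{-}(u)}$ in the weak-$*$ topology, inherited from the continuity of the Patterson--Sullivan density on $\partial\mathbb{H}$.

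For part (1), I would combine the Burger--Roblin uniqueness of the Radon invariant measure for the horocyclic flow with mixing of the geodesic flow against $m^{ps}$, through a transverse thickening argument. For the function $F_{t}(u):=\int_{-t}^{t}f(h^{s}u)\,ds$, thicken the horocyclic arc by a small parameter $\epsilon$ in the weak-unstable direction to obtain a flow-box $B_{t,\epsilon}(u)$. By the local product structure of $m^{ps}$, the integral $\int 1_{B_{t,\epsilon}(u)}\cdot(f\circ g^{r})\,dm^{ps}$ factors, up to boundary error, as a transverse mass $\nu_{\epsilon}(u)$ times $F_{t}(u)$. Mixing of $g^{r}$ with respect to $m^{ps}$ then lets $r\to\infty$ and replaces this quantity by $\frac{m^{ps}(B_{t,\epsilon}(u))}{m^{ps}(T^{1}S)}\int f\,dm^{ps}$. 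Dividing by $m^{ps}(B_{t,\epsilon}(u))\approx\nu_{\epsilon}(u)\cdot m_{H^{-}(u)}(\{|s|\leq t\})$ cancels the transverse factor, and relating $m^{ps}$ to $m$ via the Burger--Roblin disintegration on horocycles produces the asserted limit.

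Part (3) is immediate once (1) and (2) are in hand: under the convex cocompact assumption $\Omega$ is compact, and for $u$ non-periodic and non-wandering the geodesic orbit $\{g^{\log t}u:t\geq 1\}$ stays in $\Omega$, so the continuous positive $\tau$ is pinched between a positive minimum $c_{S}$ and a finite maximum $C_{S}$ on $\Omega$, giving $c_{S}t^{\delta}\leq m_{H^{-}(u)}(\{|s|\leq t\})\leq C_{S}t^{\delta}$. Combined with (1), this yields the stated two-sided asymptotic.

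The chief obstacle is the transverse thickening step in part (1): the cancellation of $\nu_{\epsilon}(u)$ between the mixing limit and the normalization must be exact to leading order as $\epsilon\to 0$, which forces a careful matching between the conformality exponent along the horocycle and the conformal weight on the transverse leaf. In the geometrically finite but non convex cocompact setting, mixing of the geodesic flow against $m^{ps}$ is non-uniform near the cusps, so an additional argument is required to control the horocyclic time spent inside cusp neighborhoods.
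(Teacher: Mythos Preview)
This theorem is not proved in the paper. It is quoted verbatim, with attribution, from Maucourant--Schapira \cite{F. Maucourant and B. Schapira}; the paper states it only as background in order to motivate the question of how close $c_S/C_S$ can be to $1$, and then proves the subsequent Corollary by adapting the argument of Theorem~\ref{thm: effective divergence for symmetric sums} to flows. There is therefore no ``paper's own proof'' of this statement to compare your proposal against.

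For what it is worth, your outline is broadly the one followed in \cite{F. Maucourant and B. Schapira}: part~(2) is indeed a direct consequence of $\delta$-conformality of the Patterson--Sullivan density together with the commutation relation $g^{r}h^{s}=h^{e^{-r}s}g^{r}$; part~(3) follows from (1) and (2) by continuity of $\tau$ on the compact set $\Omega$; and part~(1) is the substantial part, obtained in \cite{F. Maucourant and B. Schapira} via mixing of the geodesic flow with respect to $m^{ps}$ and a thickening argument of the type you sketch. But none of this is carried out in the present paper, so there is nothing here to grade your proposal against.
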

The question arises of how close to $1$ can $\frac{c_{S}}{C_{S}}$
be? For example is it true that there exists a sequence of convex
cocompact geometrically finite surfaces $S_{n}=\Gamma_{n}\backslash\mathbb{H}$
such that 
\[
\frac{c_{S_{n}}}{C_{S_{n}}}\xrightarrow[n\to\infty]{}1?
\]
By modifying our proof for flows one sees that the answer to the last
question is negative. The proof caries on verbatim once one makes
the following adjustments:
\begin{itemize}
\item Definition of bounded rational ergodicity for flows by saying that
a measure preserving flow $\left(X,\BB,m,\left\{ \phi_{s}\right\} _{s\in\RR}\right)$
is bounded rationally ergodic if there exists a set $A\in\BB$, $0<m(A)<\infty$
with $M>0$ such that for all $x\in A$ and $T>0$,
\[
\int_{0}^{T}1_{A}\circ\phi_{s}(x)ds\leq M a_T(A)
\]
where $a_T(A):=\frac{1}{m(A)^2}\int_{0}^{T}m\left(A\cap\phi_{-s}A\right)ds$ 
\item Showing that if for a monotone increasing function $a:[0,\infty)\to\left[0,\infty\right)$
and a set $A\subset\mathcal{E}$ of positive $m$- measure,
\begin{equation}
0<c\lesssim\frac{1}{a(t)}\int_{0}^{t}1_{A}\left(h^{s}(u)\right)dt\lesssim C<\infty\label{eq: BSS for flows}
\end{equation}
for $m$ a.e. $u\in\mathcal{E}$, then the functions 
\[
\mathcal{E}\times[0,\infty)\ni\left(u,t\right)\mapsto F_{t}(u):=\frac{1}{a\left(t\right)}\int_{0}^{t}1_{A}\left(h^{s}(u)\right)dt
\]
satisfy the conditions of the Egorov type theorem for continuous parameter
flows. In fact this case is much simpler and can be verified by applying
Egorov on a discretization of the time parameter (a discrete skeleton)
and then using the equicontinuity in $t$ of the map $F_{t}.$
\item By the previous step one can carry the proof verbatim by first showing
that the flow is bounded rationally ergodic and then applying our
argument on a single orbit with minor modifications (in the definition
of the stopping times). 
\end{itemize}
The concluding statement is as follows. 
\begin{cor}
There exists a universal $\epsilon>0$ such that for any $S$ a convex
cocompact geometrically finite hyperbolic surface 
\[
\frac{c_{S}}{C_{S}}>1-\epsilon
\]
 where $c_{S},C_{S}$ are the constants defined by 
\[
c_{S}:=\frac{\liminf_{t\to\infty}\frac{1}{t^{\delta(S)}}\int_{-t}^{t}f\left(h^{s}u\right)ds}{\frac{1}{m^{ps}\left(T^{1}S\right)}\int_{T^{1}S}fdm}\ m-a.e.\ u\in\mathcal{E}
\]
and 
\[
C_{S}:=\frac{\limsup_{t\to\infty}\frac{1}{t^{\delta(S)}}\int_{-t}^{t}f\left(h^{s}u\right)ds}{\frac{1}{m^{ps}\left(T^{1}S\right)}\int_{T^{1}S}fdm}\ m-a.e.\ u\in\mathcal{E},
\]
 for any $f:T^{1}S\to\RR$ continuous with compact support. Equivalently
\begin{eqnarray*}
c_{S} & := & {\rm ess-}\liminf_{T\to\infty}\tau\left(g^{\log T}u\right)\\
C_{S} & := & {\rm ess-}\limsup_{T\to\infty}\tau\left(g^{\log T}u\right).
\end{eqnarray*}

\end{cor}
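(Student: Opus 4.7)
The plan is to adapt the entire machinery underlying Theorem \ref{thm: Main Theorem} -- the BRE setup of Section 2, the quantitative $\lbeta/\ubeta$ separation of Theorem \ref{thm: effective divergence for symmetric sums}, and the transfer from the return sequence to arbitrary normalization given by Lemma \ref{lem: moving from the return sequence to general sequences} -- from $\bbZ$-actions to $\RR$-actions, following the three-step dictionary already outlined in the bulleted list immediately before the corollary. The payoff is precisely the universal constant $\epsilon := 1/10002$ of Theorem \ref{thm: Main Theorem}, translated into the horocyclic flow setting.

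First I would verify the bounded-rational-ergodicity setup. When $S$ is convex cocompact, parts (2)--(3) of the Maucourant--Schapira theorem give $\tau$ continuous and pinched between two positive constants on the compact set $\Omega$, so $a(t):=t^{\delta}$ is the natural return function and the horocyclic flow $(\mathcal{E},m,h^s)$ is invertible, conservative, ergodic, infinite-measure-preserving and boundedly rationally ergodic in the flow sense defined in the first bullet. Part (1) simultaneously identifies $c_S$ and $C_S$ as the flow analogues of $\lbeta(h)\int fdm$ and $\ubeta(h)\int fdm$ after removing the integral factor, so the ratio $c_S/C_S$ is exactly the quantity controlled by Theorem \ref{thm: Main Theorem}.

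Second I would rerun the quantitative single-orbit argument of Theorem \ref{thm: effective divergence for symmetric sums} in the continuous setting. The Egorov step is routine: $t\mapsto F_t(u):=\frac{1}{a(t)}\int_{-t}^{t}1_A(h^s u)\,ds$ is equicontinuous in $t$ on bounded intervals (as $a(t)=t^{\delta}$ is continuous and monotone), so applying the classical Egorov theorem to a discrete skeleton $\{F_{n\eta}\}_{n\geq 1}$ and upgrading to all $t$ by equicontinuity produces the required $B\subset A$ with uniform control. The admissibility conditions $(\mathrm{A1})$--$(\mathrm{A5})$ and the stopping times $\mathfrak{J}_n, L_n, l_n$ translate verbatim into first-hitting times of the orbit $(h^s u)_{s\geq t/9}$ into $B$, and the rounding manipulations in Lemmas \ref{lem: Lower bound and upper bound}--\ref{lem: bound on a(8n/9)} ($n/9, n/3, 8n/9$) work for real $t$ by the same continuity and monotonicity of $a(t)$. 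The output is $\ubeta(h)-\lbeta(h)\geq 1/5000$, after which the flow version of Lemma \ref{lem: moving from the return sequence to general sequences} delivers
\[
\frac{c_S}{C_S}\;\leq\;\sqrt{\frac{\lbeta(h)}{\ubeta(h)}}\;\leq\;\sqrt{\frac{5000}{5001}}\;\leq\;\frac{10001}{10002},
\]
exhibiting the universal $\epsilon=1/10002$ that the corollary asserts.

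The main obstacle I anticipate is the continuous-time bookkeeping in the analogues of Lemma \ref{lem: Lower bound and upper bound}(c) and Lemma \ref{lem: bound on a(8n/9)}, where multiple rounding manipulations involving $n/9$, $n/3$, $L_n/3$, and $l_n$ are chained together. For the flow these must all be re-done with continuous stopping times; while the discretization-plus-equicontinuity trick makes each individual inequality routine, keeping track of the accumulated $o(1)$ errors across the chain without losing the crucial $1/5000$ gap requires some care. The other steps -- verification of BRE for the horocycle flow, identification of $c_S, C_S$ with $\lbeta, \ubeta$, and the transfer lemma -- are straightforward adaptations of the discrete arguments.
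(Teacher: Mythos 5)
Your proposal follows the paper's own sketch precisely: the paper proves the corollary only by the three-point outline given immediately before it (define bounded rational ergodicity for flows; run Egorov on a discrete skeleton and upgrade by equicontinuity of $t\mapsto F_t$; carry the single-orbit argument over verbatim with continuous stopping times), and you expand each of those bullets and then correctly chain Theorem \ref{thm: effective divergence for symmetric sums} with the flow version of Lemma \ref{lem: moving from the return sequence to general sequences} to land on $c_S/C_S \leq \sqrt{5000/5001} \leq 1-\tfrac{1}{10002}$. One small observation: the inequality you derive has the opposite sense to the one printed in the corollary ($c_S/C_S > 1-\epsilon$); given the preceding paragraph, which asks whether $c_{S_n}/C_{S_n}$ can tend to $1$ and announces a negative answer, the printed inequality is evidently a typo and should read $c_S/C_S < 1-\epsilon$, which is exactly the direction your argument establishes.
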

\textbf{Acknowledgements:} Theorem \ref{thm: effective divergence for symmetric sums}
is part of the Author's PhD thesis in Tel Aviv University done under
the supervision of Jon Aaronson. I would like to thank Benjamin Weiss
and Jon Aaronson for introducing me to the question of divergence
of symmetric sums and Eli Glasner for suggesting the problem. This research was supported in part by
the European Advanced Grant StochExtHomog (ERC AdG 320977).

\end{document}